\theoremstyle{plain}
\newtheorem{thm}			{Theorem}[section]
\newtheorem{lemma}		[thm]	{Lemma}
\newtheorem{cor}			[thm]	{Corollary}
\newtheorem{proposition}	[thm]	{Proposition}
\theoremstyle{definition}
\newtheorem{definition}	[thm]	{Definition}
\newtheorem*{defi*}				{Definition}
\theoremstyle{remark}
\newtheorem{rem}		[thm]		{Remark}
\newtheorem*{exa*}		    		{Example}
\newtheorem*{rem*}					{Remark}
\DeclareMathOperator	{\IN}		{\mathbb{N}} 	\DeclareMathOperator	{\IZ}		{\mathbb{Z}}
		\DeclareMathOperator	{\IR}		{\mathbb{R}}
		\DeclareMathOperator	{\IE}		{\mathbb{E}} 
		\DeclareMathOperator	{\IP}		{\mathbb{P}}
\DeclareMathOperator	{\Var}		{Var}
\DeclareMathOperator	{\Ent}		{Ent}
\DeclareMathOperator	{\Cov}		{Cov}
\DeclarePairedDelimiter	\abs		{\lvert}{\rvert}
\DeclarePairedDelimiter	\norm		{\lVert}{\rVert}
\DeclarePairedDelimiter	\skal		{\langle}{\rangle}
\renewcommand 		{\epsilon}	{\varepsilon}
\renewcommand		{\phi}		{\varphi}
\renewcommand		{\partial}	{\mathfrak{d}}
\renewcommand		{\tilde}		{\widetilde}
\newcommand			{\fh}		{\mathfrak{h}}
\numberwithin		{equation}{section}
\title{Higher Order Concentration for Functions of Weakly Dependent Random Variables}
\author{F. G\"otze}
\address{Friedrich G\"otze, Faculty of Mathematics, Bielefeld University, Bielefeld, Germany}
\email{goetze@math.uni-bielefeld.de}
\author{H. Sambale}
\address{Holger Sambale, Faculty of Mathematics, Bielefeld University, Bielefeld, Germany}
\email{hsambale@math.uni-bielefeld.de}
\author{A. Sinulis}
\address{Arthur Sinulis, Faculty of Mathematics, Bielefeld University, Bielefeld, Germany}
\email{asinulis@math.uni-bielefeld.de}
\begin{document}
\subjclass{Primary 60E, Secondary 82B}
\keywords{Concentration of measure phenomenon, logarithmic Sobolev inequalities, Ising model, Gibbs sampler}
\thanks{This research was supported by CRC 1283.}
\begin{abstract}
	We extend recent higher order concentration results in the discrete setting to include functions of possibly dependent variables whose distribution (on the product space) satisfies a logarithmic Sobolev inequality with respect to a difference operator that arises from Gibbs sampler type dynamics. Examples of such random variables include the Ising model on a graph with $n$ sites with general, but weak interactions, i.e. in the Dobrushin uniqueness regime, for which we prove concentration results of homogeneous polynomials, as well as random permutations, and slices of the hypercube with dynamics given by either the Bernoulli-Laplace or the symmetric simple exclusion processes.
\end{abstract}
\date{\today}

\maketitle
\section{Introduction}	\label{section:introduction}
In this article, we study \emph{higher order} versions of the \emph{concentration of measure phenomenon} for functions of random variables $X_1, \ldots, X_n$ defined on some probability space $(\Omega, \mathcal{A}, \mathbb{P})$ with values in some Polish space $X_i: \Omega \to S_i$ which are not necessarily independent. The term \emph{higher order} shall emphasize that we prove tail estimates for functions with possibly non-bounded first order differences, or functions for which the $L^\infty$ norm of its differences increases with the size of the system, even after a proper normalization, such as quadratic forms in weakly dependent variables. \par 
To formalize this intuition we consider certain \emph{difference operators}. By a difference operator we mean an operator $\Gamma$ on the space $L^\infty(\mu)$ for some probability measure $\mu$ satisfying $\Gamma(af+b) = \abs{a} \Gamma(f)$ for $b \in \IR$ and either $a > 0$ or $a \in \IR$. The restriction $f \in L^\infty(\mu)$ is merely a minimal requirement, since $f \in L^2(\mu)$ will sometimes be sufficient to define certain operators, and in the cases that we will consider in the applications (i.e. finite probability spaces) $L^\infty(\mu)$ is the space of all functions. Hence we shall stick to this simplifying assumption. In our cases, $\mu$ is the distribution of $X \coloneqq (X_1, \ldots, X_n)$ on $S \coloneqq \times_{i = 1}^n S_i$. \par
The difference operators $\partial, \mathfrak{h}$ will be Euclidean norms corresponding to vectors $\mathfrak{h} = (\mathfrak{h}_I)_{I \in \mathcal{I}}$ or $\partial = (\partial_I)_{I \in \mathcal{I}}$ arising from the disintegration theorem on Polish spaces and can be thought of as $L^2$ and $L^\infty$ norms respectively conditioned on certain variables $\mathcal{I} \subset \mathcal{P}(\{1,\ldots,n\})$. We postpone the exact definition to Definition \ref{FirstOrderDiff} in section \ref{section:HigherOrderDiff}. \par
Using $\mathfrak{h}$, it is possible to define \emph{higher order difference operators} $\mathfrak{h}_{I_1 \ldots I_d}$ for any $d \in \mathbb{N}$ by iteration, i.e. by setting \pagebreak[3]
\begin{align} 		\label{HigherOrderDiff} 
\begin{split}	
\mathfrak{h}_{I_1 \ldots I_d}f = \mathfrak{h}_{I_1}(\mathfrak{h}_{I_2\ldots I_d}f), 
\end{split}
\end{align}
and tensors of $d$-th order differences $\mathfrak{h}^{(d)}f(x)$ with coordinates $\mathfrak{h}_{I_1\ldots I_d}f(x)$. The tensor may be regarded as a vector indexed by $\mathcal{I}^d$ and we define the norm $\abs{\mathfrak{h}^{(d)}f(x)}$ as its Euclidean norm. For instance, in certain cases $\abs{\mathfrak{h}^{(1)}f(x)}$ is just the Euclidean norm of the ``gradient'' $\mathfrak{h} f(x)$, and $\abs{\mathfrak{h}^{(2)}f(x)}$ is the Hilbert--Schmidt norm of the ``Hessian'' $\mathfrak{h}^{(2)}f(x)$. 
Additionally, we will use the notation $\norm{f}_p$ for the $p$-norm of a function $f$ (with respect to a measure $\mu$ which is clear from the context) and write
\begin{equation}
	\lVert \mathfrak{h}^{(d)}f \rVert_{p} = \lVert |\mathfrak{h}^{(d)}f| \rVert_p  \equiv \left( \mathbb{E}_\mu \abs{\mathfrak{h}^{(d)}f}^p \right)^{1/p}
\end{equation}
for any $p \in (0, \infty]$, where for $p = \infty$ this is the essential supremum with respect to $\mu$. \par

Next let us recall the notion of \emph{Poincar{\'e}} and \emph{logarithmic Sobolev inequalities} in the framework of difference operators. We say that the measure $\mu$ satisfies a \emph{Poincar{\'e} inequality} with constant $\sigma^2 > 0$ with respect to some difference operator
$\Gamma$ (in short: $\text{PI}_\Gamma(\sigma^2)$) if for all $f \in L^\infty(\mu)$
\begin{equation}	\label{PI}
	\mathrm{Var}_\mu (f) \le \sigma^2 \IE_\mu |\Gamma f|^2,
\end{equation}
where $\text{Var}_\mu (f) = \mathbb{E}_\mu f^2 - (\mathbb{E}_\mu f)^2$ is the variance functional with respect to $\mu$. \par 
Moreover, $\mu$ satisfies a \emph{logarithmic Sobolev inequality} with constant $\sigma^2 > 0$ with respect to some difference operator $\Gamma$ (in short: $\text{LSI}_\Gamma(\sigma^2)$) if for all $f \in L^\infty(\mu)$
\begin{equation}	\label{LSI}
	\Ent_\mu (f^2) \le 2 \sigma^2 \IE_\mu |\Gamma f|^2,
\end{equation}
where for any function $f \ge 0$ we denote by $\Ent_\mu (f) \coloneqq \Ent (f) \coloneqq \mathbb{E}_\mu f \log f - \mathbb{E}_\mu f \log\mathbb{E}_\mu f \in [0, \infty]$ the entropy functional with respect to $\mu$. \par 
It is well known that logarithmic Sobolev inequalities are stronger than Poincar{\'e} inequalities, i.e. if $\mu$ satisfies a logarithmic Sobolev inequality with constant $\sigma^2$, it also satisfies a Poincar{\'e} inequality with the same constant $\sigma^2$, see for example \cite{AS94} in the context of Markov semigroups, \cite[Lemma 3.1]{DSC96} in the framework of Markov chains, or \cite[Proposition 3.6]{BT06}, where also modified logarithmic Sobolev inequalities have been considered. We shall tacitly use this implication. \par
We formulate a general result in section \ref{section:GeneralResult} which may be applied to functions of the spins in Ising models, of random permutations and on slices of the hypercube. We start with an application to the Ising model with general interactions.
\subsection{Ising model}
In the special case of the Ising model $q^n$ on $n$ sites the difference operator under consideration can be written as 
\[
	\abs{\partial f}^2(\sigma) = \frac{1}{2} \sum_{i = 1}^n (f(\sigma) - f(T_i \sigma))^2 q^n(-\sigma_i \mid \sigma_1, \ldots, \sigma_{i-1}, \sigma_{i+1}, \ldots, \sigma_n),
\]
where $T_i \sigma = (\sigma_1, \ldots, \sigma_{i-1}, -\sigma_i, \sigma_{i+1}, \ldots, \sigma_n)$ is the switch operator of the $i$-th spin and $q^n ( \cdot \mid \sigma_1, \ldots, \sigma_{i-1}, \sigma_{i+1}, \ldots, \sigma_n)$ is the conditional measure. We call this the \emph{difference operator of the Gibbs sampler} (or Glauber dynamics). Additionally, we have
\[
	\abs{\mathfrak{h}f}^2(\sigma) = \frac{1}{2} \sum_{i = 1}^n (f(\sigma) - f(T_i\sigma))^2.
\]

\begin{proposition}		\label{Ising:Prop:LSI}
	Let $q^n$ be the probability measure on $\{-1,+1\}^n$ defined by normalizing $\pi(\sigma) = \exp\left( \frac{1}{2} \sum_{i,j} J_{ij} \sigma_i \sigma_j + \sum_{i = 1}^n h_i \sigma_i \right)$, where $\norm{h}_\infty \le \tilde{\alpha}$ and $J = (J_{ij})_{i,j}$ satisfies $J_{ii} = 0$ and 
	\begin{equation}	\label{Ising:eqn:Jnorm}
	\norm{J}_{1 \to 1} = \max_{i = 1,\ldots,n} \sum_{j = 1}^n \abs{J_{ij}} \le 1-\alpha.
	\end{equation}
	There is a constant $C = C(\alpha, \tilde{\alpha})$ depending only on $\alpha$ and $\tilde{\alpha}$ such that for the difference operator of the Gibbs sampler given above we have
	\begin{equation}	\label{Ising:eqn:LSI}
		\Ent_{q^n}(f^2) \le 2C \IE_{q^n} \abs{\mathfrak{d}f}^2.
	\end{equation}
	Moreover, for any $f: \{ -1, +1\}^n \to \IR$ we have
	\begin{equation}	\label{Ising:eqn:momentinequality}
		\norm{f}_p^2 - \norm{f}_2^2 \le 2C(p-2) \norm{\mathfrak{h} f}_p^2.
	\end{equation}
\end{proposition}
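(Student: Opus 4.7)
The plan is to prove (\ref{Ising:eqn:LSI}) in two steps---verifying a uniform one-site LSI for the conditionals and then combining these via an approximate tensorization of entropy in the Dobrushin uniqueness regime---and then to derive (\ref{Ising:eqn:momentinequality}) from (\ref{Ising:eqn:LSI}) by the standard entropy-method derivation of $L^p$ bounds.

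For the one-site input, each conditional $q^n(\cdot\mid\sigma_{\neq i})$ is the Bernoulli measure on $\{-1,+1\}$ with logit $m_i=\sum_{j\ne i}J_{ij}\sigma_j+h_i$, and by hypothesis $|m_i|\le \|J\|_{1\to 1}+\|h\|_\infty\le(1-\alpha)+\tilde\alpha$. Hence the conditional probability of either value lies in an interval $[p_0,1-p_0]$ with $p_0=p_0(\alpha,\tilde\alpha)>0$, and every such two-point measure satisfies a one-site LSI with a uniform constant $c_0(\alpha,\tilde\alpha)$. Furthermore, since $|J_{ij}|$ dominates the pairwise Dobrushin coefficient $D_{ij}$ of $q^n$ (in fact $D_{ij}\le\tanh|J_{ij}|$), the Dobrushin interdependence matrix $D$ obeys $\|D\|_{1\to 1}\le\|J\|_{1\to 1}\le 1-\alpha<1$. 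Under these two ingredients, an approximate tensorization of entropy---as developed by Marton and by Zegarlinski--Stroock in the context of Gibbs measures---combines the one-site LSIs into the global LSI (\ref{Ising:eqn:LSI}) with a constant $C(\alpha,\tilde\alpha)$ depending only on $c_0$ and $(1-\|D\|_{1\to 1})^{-1}$ and, crucially, independent of $n$.

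To deduce the moment inequality, note that $q^n(-\sigma_i\mid\sigma_{\neq i})\le 1$ yields $|\partial f|^2\le|\mathfrak{h}f|^2$ pointwise, so (\ref{Ising:eqn:LSI}) also holds with $\mathfrak{h}f$ on the right-hand side (with the same constant $C$). Applying this $\mathfrak{h}$-LSI to $|f|^{p/2}$ for $p\ge 2$ and using the discrete chain-rule estimate $(a^{p/2}-b^{p/2})^2\le(p/2)^2(a\vee b)^{p-2}(a-b)^2$ together with Hölder's inequality, one bounds $\IE|\mathfrak{h}(|f|^{p/2})|^2$ by $(p/2)^2\|f\|_p^{p-2}\|\mathfrak{h}f\|_p^2$ up to absolute constants. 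Combining with the standard identity $\tfrac{d}{dp}\|f\|_p^2=\tfrac{2}{p^2}\|f\|_p^{2-p}\Ent(|f|^p)$ then produces a differential inequality of the form $\tfrac{d}{dp}\|f\|_p^2\le 2C\|\mathfrak{h}f\|_p^2$, and integration from $2$ to $p$ yields (\ref{Ising:eqn:momentinequality}).

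The main obstacle is the approximate tensorization step: one must establish the LSI constant depending only on $\alpha$ and $\tilde\alpha$, not on the system size $n$. This is the delicate analytic input from the theory of logarithmic Sobolev inequalities under Dobrushin's uniqueness condition; once it is in place, the other steps---verification of the one-site LSI and the chain-rule-plus-integration argument---are essentially routine.
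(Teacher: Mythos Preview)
Your proposal is correct and follows essentially the same strategy as the paper: a uniform one-site LSI combined with Marton's approximate tensorization yields \eqref{Ising:eqn:LSI}, and then the entropy-method chain-rule argument integrated in $p$ gives \eqref{Ising:eqn:momentinequality}. One minor technical difference is worth noting: for the moment inequality the paper does not apply the global $\mathfrak h$-LSI to $|f|^{p/2}$ directly, but instead re-applies the tensorization to $|f|^p$ and uses, at each site, the one-sided estimate $(a^{p/2}-b^{p/2})_+^2\le\tfrac{p^2}{4}a^{p-2}(a-b)^2$ together with reversibility of the one-site kernel (Lemma~\ref{Ising:Lemma:MarkovKernelLSI}); this produces $|f(x)|^{p-2}$ cleanly rather than $(|f(x)|\vee|f(T_ix)|)^{p-2}$. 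Your route still works here because the ratio $q^n(T_i\sigma)/q^n(\sigma)$ (equivalently, the minimal conditional probability) is uniformly bounded in terms of $\alpha,\tilde\alpha$, so the extra factor is absorbed into $C(\alpha,\tilde\alpha)$---but the constant in that step is not ``absolute'' as you wrote.
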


\begin{rem*}
	This can be seen as a generalization of the logarithmic Sobolev inequality on $\{ -1, +1\}^n$ equipped with the uniform measure, which corresponds to the Ising model without any interactions and without an external field, i.e. $J = 0$ and $h = 0$. In general the case $J = 0$ yields $n$ independent random variables $\sigma_1,\ldots, \sigma_n$ with $\IP(\sigma_i = 1) = \frac{1}{2}(1 + \tanh(h_i))$. Thus a uniform bound on $\norm{h}_\infty$ is necessary in order for the logarithmic Sobolev constant to be stable, see e.g. \cite[Theorem A.1]{DSC96}. \par 
	Condition \eqref{Ising:eqn:Jnorm} appears in various contexts, we shall call it Dobrushin uniqueness condition, see for example \cite{Ku03}, equations (2.1) and (2.2). The Dobrushin uniqueness condition implies that the coupling matrix $A$ of the Ising model satisfies $\norm{A}_{2 \to 2} \le 1-\alpha$, which is a requirement to apply an approximate tensorization result.\par 
	In a series of papers \cite{Z92, SZ92a, SZ92b} B. Zegarlinski and D. W. Stroock have established the equivalence of the logarithmic Sobolev inequality and the so-called Dobrushin-Shlosman mixing condition on $\{-1, +1\}^{\IZ^d}$. Here we prove one implication using an approximate tensorization result by K. Marton \cite{Ma15} for the easier case $\{-1,+1\}^n$.
\end{rem*}

From an iteration procedure we obtain the following Theorem establishing tail estimates for functions of spins in the Ising model with bounded differences of higher order.

\begin{thm}	\label{Ising:Thm:d-th-order-conc}
	Let $d \in \mathbb{N}$, $q^n$ as in Proposition \ref{Ising:Prop:LSI} and $f$ be any function. Assuming the conditions
	\begin{equation}	\label{Ising:eqn:d-th-order-conc-cond1}
	\lVert \mathfrak{h}^{(k)} f \rVert_{2} \le 1 \quad \text{ for all } k = 1, \ldots, d-1
	\end{equation}
	and
	\begin{equation}	\label{Ising:eqn:d-th-order-conc-cond2}
	\norm{ \mathfrak{h}^{(d)} f }_{\infty} \le 1,
	\end{equation}
 	there exists some constant $C = C(\alpha, \tilde{\alpha}, d) > 0$ such that
	\[
		\mathbb{E}_{q^n} \exp\left(C |f - \IE_{q^n} f|^{2/d}\right) \le 2.
	\]
	Especially we have
	\[
		q^{n}(\abs{f - \IE_{q^n} f} \ge t) \le  2 \exp\left( - C t^{2/d} \right).
	\]
\end{thm}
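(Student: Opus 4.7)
The plan is to turn the moment inequality \eqref{Ising:eqn:momentinequality} into an iterated estimate on the $L^p$-norms of $f-\IE_{q^n}f$ that exhibits growth of order $p^{d/2}$, and then to read off the stated exponential integrability by a Taylor-series argument. Concretely, I aim to prove
\[
\norm{f-\IE_{q^n}f}_p \;\le\; K(\alpha,\tilde\alpha,d)\,p^{d/2} \qquad \text{for all } p\ge 2,
\]
which, via the standard moment-to-Orlicz correspondence, is equivalent (up to constants) to the claimed $(2/d)$-sub-Weibull integrability.

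The heart of the argument is to apply \eqref{Ising:eqn:momentinequality} not only to $f$ itself but to the scalar functions $g=\abs{\mathfrak{h}^{(k)}f}$ for $k=1,\ldots,d-1$. The single nontrivial step is the pointwise estimate
\[
\abs{\mathfrak{h} g}^2(\sigma) \;=\; \tfrac12\sum_{i=1}^n\bigl(\abs{\mathfrak{h}^{(k)}f(\sigma)}-\abs{\mathfrak{h}^{(k)}f(T_i\sigma)}\bigr)^2 \;\le\; \abs{\mathfrak{h}^{(k+1)}f}^2(\sigma),
\]
which is a direct application of the reverse triangle inequality after viewing $\mathfrak{h}^{(k)}f$ as a Euclidean vector indexed by multi-indices. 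Plugging this into \eqref{Ising:eqn:momentinequality} and writing $\mathfrak{h}^{(0)}f := f-\IE_{q^n}f$ produces the recursion
\[
\norm{\mathfrak{h}^{(k)}f}_p^2 \;\le\; \norm{\mathfrak{h}^{(k)}f}_2^2 + 2C(p-2)\,\norm{\mathfrak{h}^{(k+1)}f}_p^2, \qquad k=0,1,\ldots,d-1.
\]
For the base $k=0$ I bound $\Var_{q^n}(f)$ by the Poincar\'e inequality (implied by \eqref{Ising:eqn:LSI}) combined with the pointwise estimate $\abs{\mathfrak{d}f}\le\abs{\mathfrak{h}f}$; for $1\le k\le d-1$ the assumption \eqref{Ising:eqn:d-th-order-conc-cond1} gives $\norm{\mathfrak{h}^{(k)}f}_2\le 1$.

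Iterating the recursion backwards from $k=d-1$ down to $k=0$, with the terminal input $\norm{\mathfrak{h}^{(d)}f}_p\le\norm{\mathfrak{h}^{(d)}f}_\infty\le 1$ coming from \eqref{Ising:eqn:d-th-order-conc-cond2}, yields a finite geometric sum whose leading term is of order $(2C(p-2))^d$. Absorbing everything into a new constant $K=K(\alpha,\tilde\alpha,d)$ gives $\norm{f-\IE_{q^n}f}_p^2 \le K\,p^d$ for all $p\ge 2$, i.e.\ the desired moment growth.

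To convert the moment bound into the exponential statement, I expand
\[
\IE_{q^n}\exp\bigl(c\abs{f-\IE_{q^n}f}^{2/d}\bigr) \;=\; \sum_{k\ge 0}\frac{c^k}{k!}\,\IE_{q^n}\abs{f-\IE_{q^n}f}^{2k/d},
\]
bound each summand by applying the moment estimate with $p=2k/d$ (handling small $k$ by $L^p$-monotonicity), and use Stirling's formula $k^k/k! \lesssim e^k/\sqrt{k}$ to see that the series is $\le 2$ once $c=c(\alpha,\tilde\alpha,d)>0$ is chosen sufficiently small. The tail bound then drops out of Markov's inequality. I expect the reverse triangle step relating the $\mathfrak{h}$-increment of $\abs{\mathfrak{h}^{(k)}f}$ to $\abs{\mathfrak{h}^{(k+1)}f}$ to be the only place where a genuine argument is required; everything else is an organised book-keeping of the geometric sum and the Taylor expansion.
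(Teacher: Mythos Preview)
Your proposal is correct and follows essentially the same route as the paper: the paper's proof of this theorem is a one-line invocation of the general Theorem~\ref{Thm:d-th-order-conc}, whose proof in turn is exactly the argument you describe---iterate the moment inequality using the pointwise bound $\abs{\mathfrak{h}\,\abs{\mathfrak{h}^{(k)}f}}\le\abs{\mathfrak{h}^{(k+1)}f}$ (Lemma~\ref{Lemma:pointwise-estimate-difference}), handle the base case via the Poincar\'e inequality, and convert the resulting $p^{d/2}$ moment growth into exponential integrability by a Taylor expansion. You have simply unpacked that general proof directly in the Ising setting.
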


As an application, one can show concentration results for homogeneous polynomials of spins in the Ising model with bounded coefficients as follows. To begin with, let us consider the case of an Ising model without external field.

\begin{thm} 	\label{Ising:Thm:ConcPolynomials}
	Let $d \in \mathbb{N}$, $q^n$ be an Ising model as in Proposition \ref{Ising:Prop:LSI} with $h = 0$. There is a constant $c = c(d, \alpha) > 0$ such that for any $d$-tensor $a = (a_I)_{\abs{I} = d}$ the $d$-homogeneous polynomial $f = \sum_{\abs{I} = d} a_I \prod_{i \in I} \sigma_i \eqqcolon \sum_{\abs{I} = d} a_I \sigma_I$ satisfies for all $t > 0$
	\begin{equation}	\label{Ising:eqn:ConcPolynomials}
		q^n(\abs{f - \IE_{q^n} f} \ge t) \le 2 \exp \left( - \frac{t^{2/d}}{cn \norm{a}_\infty^{2/d}} \right).
	\end{equation}
\end{thm}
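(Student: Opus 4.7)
The plan is to deduce this from Theorem \ref{Ising:Thm:d-th-order-conc} applied to the rescaled polynomial $f/R$ with $R = C(d,\alpha)\, n^{d/2}\|a\|_\infty$. This reduces the statement to verifying the norm bounds $\|\mathfrak{h}^{(k)}f\|_2 \le R$ for $k = 1,\ldots,d-1$ and $\|\mathfrak{h}^{(d)}f\|_\infty \le R$.

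First I would compute the iterated differences on monomials. The signed operator satisfies $\tilde{\mathfrak{h}}_i \sigma_I = \sqrt{2}\,\sigma_I\,\mathbbm{1}[i\in I]$, and using the pointwise inequality $\mathfrak{h}_i|g| \le \mathfrak{h}_i g$ to strip absolute values at each iteration step yields the key estimate
\[
\mathfrak{h}_{i_1 \ldots i_k}f(\sigma) \le 2^{k/2}\Big|\sum_{I \supseteq T} a_I\,\sigma_I\Big| = 2^{k/2}|\tilde g_T(\sigma_{T^c})|,
\]
where $T = \{i_1,\ldots,i_k\}$ and $\tilde g_T$ is a homogeneous polynomial of degree $d-k$ in the variables $\sigma_j,\,j\notin T$, obtained by factoring out $\sigma_T$ (whose square is $1$).

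For $k = d$ only the monomial with $I = T$ survives, so summing the pointwise bound over all $d$-tuples of distinct indices gives $|\mathfrak{h}^{(d)}f|^2 \le 2^d\, d! \sum_{|T|=d}a_T^2 \le (2n)^d\|a\|_\infty^2$ and hence $\|\mathfrak{h}^{(d)}f\|_\infty \le (2n)^{d/2}\|a\|_\infty$, which matches $R$ for a suitable constant.

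For $k < d$, squaring the pointwise estimate and taking expectations reduces the $L^2$ bound to the Parseval-type inequality
\[
\sum_{(i_1,\ldots,i_k)}\mathbb{E}_{q^n}[\tilde g_T^2] \le C(d,\alpha)\, n^d\|a\|_\infty^2.
\]
In the independent case $(J=0,\ h=0)$ this holds with equality. In the Ising regime I would argue by induction on $d-k$ using the Poincar\'e inequality implied by Proposition \ref{Ising:Prop:LSI}: conditioning on $\sigma_T$ produces another Ising measure on $\sigma_{T^c}$ in the Dobrushin regime, now with effective external field of sup-norm at most $1 - \alpha$, so the LSI and hence Poincar\'e still apply with a constant depending only on $\alpha$. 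Combined with the spectral bound $\|\Cov_{q^n}(\sigma)\|_{2\to 2} \le C(\alpha)$ (Poincar\'e applied to linear functionals), the induction produces the desired scaling. The hard part will be controlling the mean contribution $(\mathbb{E}_{q^n}\tilde g_T)^2$ when $d-k$ is even: for odd $d-k$ the symmetry $\sigma \to -\sigma$ forces $\mathbb{E}_{q^n}\tilde g_T = 0$, but in the even case one must carefully track higher-order correlations to avoid losing an extra factor of $n$ and thereby spoiling the advertised scale $cn\|a\|_\infty^{2/d}$.
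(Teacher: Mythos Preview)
Your overall reduction is the same as the paper's: both establish $\|\mathfrak{h}^{(k)}f\|_2 \le C(d,\alpha)\,n^{d/2}\|a\|_\infty$ for $k<d$ and $\|\mathfrak{h}^{(d)}f\|_\infty \le C(d)\,n^{d/2}\|a\|_\infty$, then feed these into the moment iteration (the paper writes out \eqref{Ising:eqn:f-IEflesumoverderivatives} directly rather than quoting Theorem~\ref{Ising:Thm:d-th-order-conc}, but that is cosmetic). Your computation of the iterated differences and the $k=d$ case are correct and match equation~\eqref{Ising:eqn:mathfrakhi1id}.

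The gap is precisely the one you flag, and it is not a technicality. The paper does \emph{not} argue by a Poincar\'e-plus-conditioning induction for the $L^2$ bound; it invokes \cite[Lemma~3.1]{GLP17}, which says that under the Dobrushin condition the $q^n$-expectation of a polynomial of degree $2m$ in the spins with bounded coefficients is $O(n^{m})$. Applied to each $(\mathfrak{h}_{i_1\ldots i_k}f)^2$ --- which by \eqref{Ising:eqn:mathfrakhi1id} is dominated by the square of a degree-$(d-k)$ polynomial --- this gives $\|\mathfrak{h}_{i_1\ldots i_k}f\|_2^2 \le c_k n^{d-k}$, and summing over the $O(n^k)$ index tuples yields $\|\mathfrak{h}^{(k)}f\|_2^2 \le c_k n^d$. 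That lemma is exactly the correlation-decay input needed to keep $(\IE_{q^n}\tilde g_T)^2$ at the right scale; without it, the trivial bound $|\IE_{q^n}\tilde g_T|\le C n^{d-k}\|a\|_\infty$ loses a factor $n^{d-k}$ and the final exponent becomes $c\,n^{2-2/d}$ instead of $c\,n$.

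Two smaller points. First, the conditioning on $\sigma_T$ is a detour: $\tilde g_T$ does not depend on $\sigma_T$, so the quantity $\IE_{q^n}[\tilde g_T^2]$ is already an expectation under the full measure, and passing to the conditional Ising model on $T^c$ does not help (and would anyway reintroduce a nonzero effective field, killing the $\sigma\mapsto-\sigma$ symmetry you rely on for odd degree). Second, the covariance spectral bound you mention handles only the degree-$2$ case; it does not by itself control higher even moments. So as written the proposal is incomplete: you need either \cite[Lemma~3.1]{GLP17} or an independent proof of the same correlation estimate.
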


Note that by homogeneity we could impose without loss of generality the condition $\sup_{\abs{I} = d} \abs{a_I} \le 1$ and remove $\norm{a}_\infty^{2/d}$ in the exponentiation, since a simple rescaling yields for any function $f = \sum_{\abs{I} = d} a_I \sigma_I$
	\begin{equation}
		q^n(\abs{f - \IE_{q^n} f} \ge t) \le 2 \exp \left( - \frac{t^{2/d}}{cn\norm{a}_\infty^{2/d}} \right).
	\end{equation}

This result improves upon \cite[Theorem 1]{GLP17} as well as on \cite[Theorem 5]{DDK17} by removing all logarithmic dependencies in the window of concentration and in the concentration parameter in the exponential. This bound is optimal in terms of the dependence on $t$ and $n$, since the uniform measure $\mu = \otimes_{i = 1}^n \frac{1}{2}(\delta_{-1} + \delta_{+1})$ can also be interpreted as an Ising model and via hypercontractivity arguments and the Fourier-Walsh decomposition one can see that 
\[
	\mu(\abs{f - \IE_\mu f} \ge t) \le 2 \exp\left( -\frac{t^{2/d}}{C(d)n} \right)
\]
for a $d$-homogeneous polynomial $f$, see for example \cite[Chapter 5.3]{BGL13} or \cite[Chapter 3.8]{DDK17}. \par
For $d \in \{1,2,3,4\}$ we also provide more accurate estimates for $f = \sum_{\abs{I} = d} a_I \sigma_I$ using the Hilbert-Schmidt norms of the tensor $a = (a_I)_{\abs{I} = d}$ by approximating $f$ by a lower-order polynomial, i.e. we will see that for some constants $C_1 = C_1(d,\alpha) > 0, C_2 = C_2(d, \alpha) > 0$
\[
	q^n(\abs{f - \IE_{q^n} f} \ge t) \le C_1 \exp\left( - \frac{t^{2/d}}{C_2 \norm{a}_{HS}^{2/d}} \right). 
\]
However it will become clear that this approach is cumbersome, since one needs to consider an approximation by a $(d-1)$-th order polynomial and keep track of all the coefficients involved. \par
Moreover we can establish similar results for Ising models with external fields $h \ne 0$. Note that the major difference to the Ising model without external field is the loss of spin symmetry, i.e. the map $\sigma \mapsto -\sigma$ does not preserve the measure $q^n$ (more precisely, the push-forward is an Ising model with external field $-h$), and hence in general all homogeneous polynomials of odd degree are not centered random variables anymore. To overcome this obstruction we can recover concentration results for polynomial functions in $\tilde{X_i} \coloneqq X_i - \IE_{q^n} X_i$. To this end, define the (generalized) diagonal as 
\[
	\Delta_d \coloneqq \{ (i_1,\ldots, i_d) \in \{1,\ldots,n\}^d : \abs{\{i_1,\ldots,i_d\}} < d \}.
\]
and call a tensor $A = (A_{i_1,\ldots,i_d})_{i_1,\ldots,i_d = 1,\ldots, n}$ symmetric if $A_{i_1,\ldots, i_d} = A_{\pi(i_1),\ldots, \pi(i_d)}$ for any permutation $\pi \in S_d$. 
For notational convenience, let us write for any subset $I = \{i_1, \ldots, i_d \} \subset \{1,\ldots,n\}$ the product $X_I \coloneqq \prod_{i \in I} X_i$. We shall stick to the following four cases. Let $d \in \{1,\ldots, 4\}$ and define for any $d$-tensor $A = (a_{i_1,\ldots,i_d})_{i_1,\ldots,i_d=1,\ldots,n}$ with vanishing diagonal the functions
\begin{align*}
	f_{1,A}(X) &= \sum_{i = 1}^n a_i \tilde{X_i}, \\
	f_{2,A}(X) &= \sum_{i,j=1}^n a_{ij} (\tilde{X}_{ij} - \IE \tilde{X}_{ij}), \\
	f_{3,A}(X) &= \sum_{i,j,k = 1}^n a_{ijk} \left( \tilde{X}_{ijk} - \IE \tilde{X}_{ijk} - 3 \tilde{X}_i \IE( \tilde{X}_{jk} ) \right), \\
	f_{4,A}(X) &= \sum_{i,j,k,l = 1}^n a_{ijkl} \left( \tilde{X}_{ijkl} - \IE \tilde{X}_{ijkl} - 4 \tilde{X_i} \IE  \tilde{X}_{jkl} - 6 \tilde{X}_{ij} \IE \tilde{X}_{kl} + 6 \IE \tilde{X}_{ij} \IE \tilde{X}_{kl} \right).
\end{align*}

\begin{thm}	\label{Ising:Thm:ConcPolynomialsExternalField}
	Let $q^n$ be an Ising model as in Proposition \ref{Ising:Prop:LSI}, with an external field $h$. Let $d \in \{1,2,3,4\}$ be fixed, $A = (A_{i_1,\ldots,i_d})_{i_1,\ldots,i_d=1,\ldots,n}$ a symmetric tensor with vanishing diagonal and $f_{d,A}$ as above. For some constant $C = C(\alpha,\beta,d) > 0$ we have
	\begin{equation}
		q^n\left( \abs{f_{d,A} - \IE_{q^n} f_{d,A}} > t \right) \le 2 \exp \left( - \frac{t^{2/d}}{C \norm{A}_{HS}^{2/d}} \right) \le 2 \exp \left( - \frac{t^{2/d}}{Cn \norm{A}_\infty^{2/d}} \right).
	\end{equation}
\end{thm}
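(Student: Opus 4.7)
The plan is to apply Theorem~\ref{Ising:Thm:d-th-order-conc} to the rescaled function $g = f_{d,A}/(C\norm{A}_{HS})$ for a suitable constant $C = C(\alpha, \tilde\alpha, d)$. Once we verify bounds of order $\norm{A}_{HS}$ on the $L^2$-norms of $\mathfrak{h}^{(k)} f_{d,A}$ for $k = 1, \ldots, d-1$ and on the $L^\infty$-norm of $\mathfrak{h}^{(d)} f_{d,A}$, the first inequality in the theorem follows by rescaling. The second inequality $\norm{A}_{HS}^{2/d} \le n\norm{A}_\infty^{2/d}$ is elementary: since $A$ has at most $n^d$ entries each bounded by $\norm{A}_\infty$, $\norm{A}_{HS}^2 \le n^d\norm{A}_\infty^2$.

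The computation of the discrete differences exploits the specific Wick-type corrections built into the $f_{d,A}$. For $d=1$ we have $\mathfrak{h}_k f_{1,A}(\sigma) = \sqrt 2\, a_k\sigma_k$, so $\abs{\mathfrak{h} f_{1,A}}^2 = 2\norm{a}_2^2$ pointwise. For $d=2$, using the vanishing diagonal and symmetry of $A$, a direct calculation gives
\begin{equation*}
\mathfrak{h}_k f_{2,A}(\sigma) = 2\sqrt 2\, \sigma_k \sum_{j \ne k} a_{kj}\tilde X_j,
\end{equation*}
and a further difference $\mathfrak{h}_l$ with $l\ne k$ produces the constant $4\sigma_k\sigma_l a_{kl}$; the off-diagonal part of $\abs{\mathfrak{h}^{(2)} f_{2,A}}^2$ therefore equals $16\norm{A}_{HS}^2$ pointwise. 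For $d\in\{3,4\}$ the analogous but more intricate computations show that the subtracted expectations $\IE\tilde X_{jk}$, $\IE\tilde X_{jkl}$, etc., in the definitions of $f_{3,A}$ and $f_{4,A}$ are precisely those needed so that each first-order difference $\mathfrak{h}_k f_{d,A}$ is again a Wick-type polynomial of the form $f_{d-1, A^{(k)}}$ for a derived tensor $A^{(k)}$ satisfying $\sum_k \norm{A^{(k)}}_{HS}^2 \le C\norm{A}_{HS}^2$. This enables an induction on $d$.

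To turn coefficient bounds into $L^2$-bounds, I would use that under \eqref{Ising:eqn:Jnorm} the spin covariance matrix $\Sigma = (\Cov(\sigma_i, \sigma_j))_{ij}$ has operator norm bounded by a constant depending only on $\alpha$, which is a standard consequence of the Poincar\'e inequality contained in Proposition~\ref{Ising:Prop:LSI}. For a linear polynomial this gives $\IE(\sum_j b_j\tilde X_j)^2 = \skal{b, \Sigma b} \le \norm{\Sigma}_{2\to 2}\norm{b}_2^2$, and combined with the moment inequality \eqref{Ising:eqn:momentinequality} applied iteratively this yields the required $L^2$- and $L^p$-bounds on the higher-degree Wick polynomials arising at each level of the induction.

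The main obstacle is organising the combinatorics for $d = 3, 4$: one must verify explicitly that the precise Wick corrections close up under taking discrete differences, so that each $\mathfrak{h}_k f_{d,A}$ really is of the form $f_{d-1, A^{(k)}}$ with a controllable coefficient tensor. Once this \emph{compatibility with Wick centering} is established, the cases $d = 3, 4$ reduce inductively to the already treated cases $d = 1, 2$. A secondary subtlety is the handling of the diagonal iterated differences: by the identity $\mathfrak{h}_{kk} f = \sqrt 2\, \mathfrak{h}_k f$ these reduce the analysis to one lower order, and their contribution is absorbed by the same $\norm{A}_{HS}$ scaling.
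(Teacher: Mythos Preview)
Your approach is correct in spirit and shares with the paper the crucial structural observation that $\mathfrak{h}_i f_{d,A}$ is (up to a constant) of the form $|f_{d-1,A^{(i)}} - \IE f_{d-1,A^{(i)}}|$ with $\sum_i \|A^{(i)}\|_{HS}^2 = \|A\|_{HS}^2$. However, the paper does \emph{not} route through Theorem~\ref{Ising:Thm:d-th-order-conc}. Instead it proves directly, by induction on $d$, the $L^p$ estimate $\|f_{d,A}-\IE f_{d,A}\|_p^2 \le c_d\,p^d\,\|A\|_{HS}^2$: one application of the moment inequality \eqref{Ising:eqn:momentinequality} gives $\|f_{d,A}-\IE f_{d,A}\|_p^2 \le 2Cp\sum_i \|\mathfrak{h}_i f_{d,A}\|_p^2$, and the recursive identity together with the induction hypothesis for degree $d-1$ closes the loop. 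From this $L^p$ bound the concentration follows immediately. This is cleaner than your plan, which requires separately bounding $\|\mathfrak{h}^{(k)} f_{d,A}\|_2$ for every $k=1,\ldots,d-1$ (each via its own Poincar\'e/covariance argument) and $\|\mathfrak{h}^{(d)} f_{d,A}\|_\infty$; the paper's single induction absorbs all of these at once. One small correction to your write-up: in the Ising setting $\mathfrak{h}_i f$ is invariant under $T_i$, so $\mathfrak{h}_{kk} f = 0$ (not $\sqrt{2}\,\mathfrak{h}_k f$); the diagonal iterated differences simply vanish.
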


\begin{rem*}
	Note that Theorem \ref{Ising:Thm:ConcPolynomialsExternalField} can be extended to arbitrary $d \in \IN$, i.e. there exists a sequence of polynomials $(g_d)_{d \ge 1}, g_d: \IR^d \to \IR$ (with $g_1 = g_2 = 0$) of order $d-2$ with the property that
	\begin{align*}
		\mathfrak{h}_j (\sum_{\abs{I} = d} a_{I} (\tilde{X}_{I} - g_d(\tilde{X}_I))) = d \abs*{\sum_{\abs{I} = d-1} a^{(j)}_{I} ((\tilde{X}_{I} - \IE (\tilde{X}_{I})) - (g_{d-1}(\tilde{X}_I) - \IE g_{d-1}(\tilde{X}_I)))}.
	\end{align*}
	From this, the recursion in the proof can be extended to arbitrary $d \in \IN$ and the concentration result as well. But even for $d = 5$ this will be cumbersome to formulate, since one has to keep track of all the expectations involved to ensure that all the ``partial derivatives'' are centered for any degree up to $d-1$.
\end{rem*}

\subsection{General results}	\label{section:GeneralResult}
The results for the Ising model are an application of our main results. Let us $\abs{\partial f} = \left( \sum_{I \in \mathcal{I}} (\partial_i f)^2 \right)^{1/2}$ (associated to some set $\mathcal{I}$). For measures $\mu$ satisfying $\text{LSI}_{(\partial, \mathcal{I})}(\sigma^2)$ we derive moment inequalities which relate the $L^p(\mu)$-norms of functions $f$ with $L^p(\mu)$ norms of their differences $\abs{\partial f}$. This leads to a concentration of measure of higher order for functions with bounded differences of higher order. 

\begin{thm}	\label{Thm:d-th-order-conc}
	Let $d \in \mathbb{N}$, assume that $\mu$ satisfies $LSI_{(\partial,\mathcal{I})}(\sigma^2)$ with constant $\sigma^2 > 0$ and let $f \in L^\infty(\mu)$. Assuming the conditions
	\begin{equation}	\label{eqn:d-th-order-conc-cond1}
	\lVert \mathfrak{h}^{(k)} f \rVert_{2} \le \min(1, \sigma^{d-k})\quad \text{for all } k = 1, \ldots, d-1
	\end{equation}
	and
	\begin{equation}	\label{eqn:d-th-order-conc-cond2}
	\lVert\mathfrak{h}^{(d)} f\rVert_{\infty} \le 1,
	\end{equation}
	there exists some universal constant $c > 0$ such that
	\[
		\mathbb{E}_\mu \exp\left(c |f - \IE_\mu f|^{2/d}\right) \le 2.
	\]
	A possible choice is $c = 1/(12 \sigma^2 e)$.
\end{thm}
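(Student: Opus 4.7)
My plan is to iterate the classical Aida--Stroock/Bobkov--Götze moment inequality that follows from an $\mathrm{LSI}$, combined with the hypotheses on the higher-order difference tensors.

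First, the $\mathrm{LSI}_{(\partial,\mathcal{I})}(\sigma^2)$ applied through the Herbst argument (differentiating $p \mapsto \|f\|_p^p$ and exploiting the $\mathrm{LSI}$ for $f^{p/2}$) yields the moment inequality $\|g\|_p^2 - \|g\|_2^2 \le 2\sigma^2(p-2)\,\|\partial g\|_p^2$ for every $p \ge 2$ and every $g \in L^\infty(\mu)$. Applied to $g = f - \mathbb{E}_\mu f$ and combined with the Poincaré consequence of $\mathrm{LSI}$ (the variance bounded by $\sigma^2 \mathbb{E}|\partial f|^2$) this becomes a \emph{centered} version $\|f-\mathbb{E}_\mu f\|_p^2 \le 2\sigma^2(p-1)\|\partial f\|_p^2$. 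Since $|\partial g| \le |\mathfrak{h} g|$ pointwise for every $g$ (as discussed in Section~\ref{section:HigherOrderDiff}, $\partial$ is an $L^2$-average and $\mathfrak{h}$ an $L^\infty$ analogue of the same differences), we may replace $\partial$ by $\mathfrak{h}$ on the right-hand side.

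The plan is then to iterate. Put $B_k := \|\mathfrak{h}^{(k)} f\|_p$ and $B_0 := \|f - \mathbb{E}_\mu f\|_p$. Apply the non-centered inequality to $g = |\mathfrak{h}^{(k)} f|$ for $k \ge 1$: using the triangle-inequality-type bound $|\partial\,|\mathfrak{h}^{(k)} f|\,| \le |\mathfrak{h}^{(k+1)} f|$ (which follows because the tensor norm is Euclidean, so differencing commutes with the inner $L^2$-contraction up to the triangle inequality), one obtains
\begin{equation*}
B_k^2 \;\le\; \|\mathfrak{h}^{(k)} f\|_2^2 \;+\; 2\sigma^2(p-2)\, B_{k+1}^2 \;\le\; \min(1,\sigma^{d-k})^2 \;+\; 2\sigma^2(p-2)\, B_{k+1}^2,
\end{equation*}
while $B_d \le \|\mathfrak{h}^{(d)} f\|_\infty \le 1$ by \eqref{eqn:d-th-order-conc-cond2}. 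Combined with the centered version at $k=0$, a straightforward induction from $k=d$ down to $k=0$, exploiting that the hypothesis $\|\mathfrak{h}^{(k)} f\|_2 \le \sigma^{d-k}$ is exactly what is needed to absorb the $\sigma^{2(d-k)}$ correction at each step into the dominant iterated term, yields
\begin{equation*}
\|f - \mathbb{E}_\mu f\|_p^2 \;\le\; K_d\, \sigma^{2d}\, (p-1)^d
\end{equation*}
for some universal $K_d$ (tracking the constants, the inequality $2e(p-1) \ge$ the relevant factor produces the explicit $1/(12\sigma^2 e)$ in the statement after balancing).

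Finally, this moment bound is converted into the stretched exponential estimate by a standard Chebyshev/optimization step: writing $\mathbb{E}_\mu \exp(c|f-\mathbb{E}_\mu f|^{2/d})$ as a Taylor series, plugging in the moment estimate $\|f-\mathbb{E}_\mu f\|_p \le \sqrt{K_d}\, \sigma^d (p-1)^{d/2}$ with $p = 2k$, and choosing $c$ small enough that the resulting series of the form $\sum_k c^k (Cd\sigma^{2}k)^{k}/k!$ converges to at most $2$; Stirling gives exactly the $1/(12\sigma^2 e)$ threshold. The main obstacle is the bookkeeping in the induction — getting the mixed $L^2$/$L^p$ terms to telescope cleanly under the asymmetric hypothesis $\min(1,\sigma^{d-k})$ while controlling the constants sharply enough to extract the universal $c = 1/(12\sigma^2 e)$; the triangle inequality $|\partial\,|\mathfrak{h}^{(k)} f|\,| \le |\mathfrak{h}^{(k+1)} f|$ is the one structural input that must be justified for the abstract $(\partial,\mathfrak{h})$ pair introduced in Section~\ref{section:HigherOrderDiff}.
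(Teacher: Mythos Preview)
Your proposal is correct and follows essentially the same route as the paper: the moment inequality $\|g\|_p^2 - \|g\|_2^2 \le 2\sigma^2(p-2)\|\mathfrak{h}g\|_p^2$ (Proposition~\ref{Prop:momentinequality}), the pointwise bound $|\mathfrak{h}\,|\mathfrak{h}^{(k)}f|\,| \le |\mathfrak{h}^{(k+1)}f|$ (Lemma~\ref{Lemma:pointwise-estimate-difference}), iteration down from $k=d$ together with Poincar\'e for the base term, and finally the conversion of the resulting bound $\|f-\mathbb{E}_\mu f\|_p \le (3\sigma^2 p)^{d/2}$ into exponential integrability via $\||f|^{2/d}\|_k \le 6\sigma^2 k$ and \eqref{eqn:subexpAndMomentConditions}. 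The only cosmetic difference is that the paper keeps the telescoping sum $\sum_{k=1}^d (2\sigma^2 p)^k$ explicit (which is what makes the constant $d$-independent and yields exactly $c=1/(12\sigma^2 e)$), whereas you package it as an induction with a constant $K_d$; to recover the stated $c$ you will need to be a bit more careful there than your sketch indicates.
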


Since we are interested in the asymptotics for large $n$, the logarithmic Sobolev constant $\sigma^2$ might depend on $n$ and thus the constant $c$ in Theorem \ref{Thm:d-th-order-conc} might also depend on $n$. However, if the logarithmic Sobolev constant is independent of $n$, one may rewrite condition \eqref{eqn:d-th-order-conc-cond1} as
	\begin{equation}	\label{eqn:d-th-order-conc-cond1-alt}
	\lVert \mathfrak{h}^{(k)} f \rVert_{2} \le 1	\quad \text{for all } k = 1, \ldots, d-1
	\end{equation}	
Moreover, note that here one needs to control the first $d-1$ differences, but since we need bounds for $L^2(\mu)$ norms, various tools like variance decomposition or Poincar{\'e} inequality are available to achieve this.

\subsection{Outline}
In section \ref{section:HigherOrderDiff} we motivate and define the difference operators and prove the main result Theorem \ref{Thm:d-th-order-conc} by estimating the growth of moments under a logarithmic Sobolev inequality. Section \ref{section:Applications} contains examples of measures satisfying a logarithmic Sobolev inequality with respect to the Gibbs sampler type Dirichlet form. In section \ref{section:Applications:Ising} we prove Theorems \ref{Ising:Thm:d-th-order-conc} and \ref{Ising:Thm:ConcPolynomials} as well as Proposition \ref{Ising:Prop:LSI} and show by way of example that a third-order polynomial in the Ising model is concentrated around a first order polynomial, and prove Theorem \ref{Ising:Thm:ConcPolynomialsExternalField}. Sections \ref{section:Applications:randompermutations} and \ref{section:Applications:BLSSE} serve to demonstrate how to interpret the logarithmic Sobolev inequality with respect to difference operators corresponding to $(\partial, \mathcal{I})$ in the cases of random walks generated by switchings on either the symmetric group and the Bernoulli-Laplace and symmetric simple exclusion process, to indicate possible further applications. Finally, in section \ref{section:ApproximateTensorization} we give a proof of an approximate tensorization result given by K. Marton.

\section{Higher order difference operators for dependent arguments} \label{section:HigherOrderDiff}
To facilitate notations, we will write for any vector $(x_1, \ldots, x_n)$ and any subset $I \subset \{1,\ldots, n\}$, $x_I \coloneqq (x_i)_{i \in I}$ and $\overline{x_I} \coloneqq (x_i)_{i in I^c}$,  $\overline{x_i} \coloneqq \overline{x_{\{i\}}}$, and given any vector $x = (x_k)_{k \in I}$ and $y = (y_k)_{k \in I^c}$, $(x,y)$ for the vector with $(x,y)_k = \begin{cases} x_k & k \in I \\ y_k & k \in I^c \end{cases}$. Consistently, we shall use the notation $S_I = \otimes_{i \in I} S_i$ and $\overline{S_I} \coloneqq \otimes_{i \in I^c} S_i$ and denote by $\overline{\pi_I}: S \to \overline{S_I}, x \mapsto \overline{x_I}$ the (projection) map and  by $\overline{\mu_I} \coloneqq \mu_{\overline{\pi_I}}$ the push-forward measure. \par
In order to define the difference operators we recall the disintegration theorem in a special form for product spaces (although not endowed with product probability measures)  required in our context. For the existence we refer to \cite[Chapter III]{DM78} and for a modern formulation to \cite[Theorem 5.3.1]{AGS08}. 

\begin{proposition}[Disintegration theorem for product spaces]	\label{Prop:DisintegrationTheorem}
Let $S_1, \ldots, S_n$ be Polish spaces, $S \coloneqq \otimes_{i = 1}^n S_i$ endowed with the Borel $\sigma$-algebra and a Borel probability measure $\mu$. There exists a Markov kernel $(m_{\overline{x_I}})_{\overline{x_I} \in \overline{S_I}}$ such that 
\[ 
	\mu(A) = \int m_{\overline{x_I}}(A) d\mu_{\overline{\pi_I}} (\overline{x_I}) \quad \text{for} \quad A \in \mathcal{B}(S). 	
\]
Moreover, the Markov kernel can be seen as a family of probability measures on $S_I$ and for any $f \in L^1(\mu)$ we have
\[
	\int f d\mu = \int_{\overline{S_I}} \int_{S_I} f(\overline{x_I}, y_I) dm_{\overline{x_I}}(y_I) d \overline{\mu}_{I}(\overline{x_I}).
\]
\end{proposition}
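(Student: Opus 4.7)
The plan is to reduce the statement to the classical disintegration theorem for Borel probability measures on a product of two Polish spaces, which in turn amounts to the existence of regular conditional probabilities. Since each $S_i$ is Polish, so are $S_I = \otimes_{i \in I} S_i$ and $\overline{S_I} = \otimes_{i \in I^c} S_i$, and the natural map $S \to \overline{S_I} \times S_I$ sending $x$ to $(\overline{x_I}, x_I)$ is a Borel isomorphism that identifies $\overline{\pi_I}$ with the first coordinate projection. It therefore suffices to disintegrate a Borel probability measure on $\overline{S_I} \times S_I$ along this projection and produce a kernel of probability measures on the second factor.

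For the construction itself I would proceed via a countable-algebra Radon--Nikodym argument. Fix a countable Boolean algebra $\mathcal{A}_0 \subset \mathcal{B}(S_I)$ generating $\mathcal{B}(S_I)$, and for each $B \in \mathcal{A}_0$ select a Borel version $g_B: \overline{S_I} \to [0,1]$ of the conditional probability $\IE_\mu[\eins_{\overline{S_I} \times B} \mid \overline{\pi_I} = \cdot]$. The countably many identities expressing finite additivity, monotonicity and monotone continuity of $B \mapsto g_B(\overline{x_I})$ hold $\overline{\mu_I}$-almost surely, so outside a single $\overline{\mu_I}$-null set $N$ this map is a countably additive pre-measure on $\mathcal{A}_0$. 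Since $S_I$ is Polish, its Borel structure is standard, and such a pre-measure extends uniquely to a Borel probability measure $m_{\overline{x_I}}$ on $S_I$ (one may transport to $[0,1]$ through a Borel isomorphism and extend a distribution function in the usual way). For $\overline{x_I} \in N$ one assigns an arbitrary fixed probability, which affects no integrated quantity.

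With the kernel in hand, the identity $\mu(A) = \int m_{\overline{x_I}}(A) \, d\overline{\mu_I}(\overline{x_I})$ holds by construction for $A$ of the form $\overline{S_I} \times B$ with $B \in \mathcal{A}_0$, and measurability of $\overline{x_I} \mapsto m_{\overline{x_I}}(A)$ is inherited from the Borel regularity of the generators $g_B$. A standard monotone class / $\pi$-$\lambda$ argument propagates both the measurability and the integral identity from product rectangles to all of $\mathcal{B}(S)$. The Fubini-type formula for general $f \in L^1(\mu)$ then follows by approximating $f$ through simple functions, applying linearity, and passing to the limit via monotone convergence on positive and negative parts.

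The main obstacle, and the only step that is not entirely routine, is assembling the pointwise probability kernel from the merely $\mu$-a.e.\ defined conditional expectations: the $B$-dependent exceptional null sets must be amalgamated into a single one (which works precisely because the generating algebra is countable), and the resulting finitely additive pre-measure must then be upgraded to a Borel probability (which works precisely because $S_I$ is standard Borel). This standardness, and hence the Polish assumption on the $S_i$, is the genuine content behind the otherwise formal manipulations.
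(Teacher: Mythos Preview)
Your sketch is correct and is essentially the classical construction of regular conditional distributions on standard Borel spaces: reduce to a two-factor product, build a kernel from conditional expectations on a countable generating algebra, use the Polish (hence standard Borel) structure of $S_I$ to upgrade the a.e.\ finitely additive object to a genuine probability measure, and extend by monotone class arguments. There are no gaps in the outline.

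That said, the paper itself does not give a proof of this proposition at all: it simply states the result and refers to \cite[Chapter III]{DM78} and \cite[Theorem 5.3.1]{AGS08} for the existence. So there is no ``paper's own proof'' to compare against. Your argument is the standard one that underlies those references, and in particular the AGS formulation of disintegration is proved along exactly the lines you describe.
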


This decomposition of a measure into a part which depends on the coordinates in some subset $I \subset \{1,\ldots, n\}$ and a conditional probability given the variables $X_I$ will serve as a starting point for the definition of our difference operators as follows.

\begin{definition}	\label{FirstOrderDiff}
Let $S_1, \ldots, S_n$ be Polish spaces and $\mu$ a measure on $S = \otimes_{i = 1}^n S_i$. For each subset $I \subset \{1,\ldots,n\}$ let $m_{\overline{x_I}}$ be the Markov kernel from Proposition \ref{Prop:DisintegrationTheorem}. Let $\mathcal{I} \subset \mathcal{P}(\{1, \ldots, n \})$ be a set of subsets. 
\begin{enumerate}[(i)]
\item For any $f \in L^2(\mu)$ and any $I \in \mathcal{I}$, let
\[
	\partial_I f(x) \coloneqq \left( \frac{1}{2} \int (f(x) - f(\overline{x_I}, y_I))^2 dm_{\overline{x_I}}(y_I) \right)^{1/2} 
\]
and introduce $\partial f = (\partial_I f)_{I \in \mathcal{I}}$.
\item For any $f \in L^\infty(\mu)$ and any $I \in \mathcal{I}$, define 
\[
	\fh_I f(x) \coloneqq \frac{1}{\sqrt{2}} \norm{f(\overline{x_I},y_I) - f(\overline{x_I},z_I)}_{L^\infty\left(m_{\overline{x_I}} \otimes m_{\overline{x_I}}(y_I, z_I)\right)},
\]
and $\fh f \coloneqq (\fh_I f)_{I \in \mathcal{I}}$.
\end{enumerate}
\end{definition}

For either $\partial$ or $\fh$ we can define a difference operator by setting $\Gamma(f) = \abs{\partial f}$ or $\Gamma(f) = \abs{\fh f}$ for the Euclidean norm $\abs{\cdot}$ and call it the associated operator to $(\partial, \mathcal{I})$ or $(\mathfrak{h}, \mathcal{I})$ respectively. It is clear that $\Gamma$ satisfies $\Gamma(af+b) = \abs{a} \Gamma(f)$. \par

As already mentioned in the introduction, on the basis of $\mathfrak{h}$, we define for any $d \in \mathbb{N}$ and any $I_1, \ldots, I_d \in \mathcal{I}$
\begin{align} 	
\begin{split}	
\mathfrak{h}_{I_1 \ldots I_d}f = \mathfrak{h}_{I_1}(\mathfrak{h}_{I_2\ldots I_d}f), 
\end{split}
\end{align}
and tensors of $d$-th order differences $\mathfrak{h}^{(d)}f(x)$ with coordinates $\mathfrak{h}_{I_1\ldots I_d}f(x)$, and analogously for $\partial$. \par

\begin{rem*}
The quantity $\int \abs{\partial f}^2 d\mu$ has the interpretation of a Dirichlet form. Indeed, defining the Markov kernel $m_x(dy) = \frac{1}{\abs{\mathcal{I}}} \sum_{I \in \mathcal{I}} m_{\overline{x_I}}(dy)$, it can be shown by expanding $\frac{1}{2} \iint (f(x) - f(y))^2 dm_x(y) d\mu(x)$ that
\[
	\frac{1}{2 \abs{\mathcal{I}}} \int \abs{\partial f}^2 d\mu = \frac{1}{2 \abs{\mathcal{I}}} \sum_{I \in \mathcal{I}} \iint (f(x) - f(y))^2 m_{\overline{x_I}}(dy) \mu(dx) =  \skal{f, -Lf}_\mu,
\]
where $Lf(x) = \int f(y) - f(x) dm_x(y)$ is the Laplacian. Hence there is an intimate connection to a Markov chain viewpoint, i.e. there is a natural dynamics for which $\int \abs{\partial f}^2 d\mu$ is its Dirichlet form. \par
The special case given by $\mathcal{I} = \mathcal{I}_1 \coloneqq \{ \{i \}, i = 1, \ldots, n \}$ translates into the disintegration with respect to $n-1$ variables and is well known, since the dynamics corresponds to the Glauber dynamics. Here, $\partial f$ and $\mathfrak{h}f$ are vectors in $\IR^n$. In probabilistic terms the definition of $\mathfrak{h}_i f(x)$ can be interpreted as an upper bound on the difference of $f$ if one updates the coordinate $i$, conditional on $\overline{x_i}$ being fixed. Moreover $\mathfrak{h}$ already appeared in the works of C. McDiarmid on concentration inequalities for functions with bounded differences, see e.g. \cite{McD89}. Here $\mathfrak{h}_i f(x)$ can still fluctuate and does not need to be bounded, resulting in possibly non-Gaussian concentration. \par
 In some cases, $\mathfrak{h}_I f$ is a function which depends on the coordinates $\overline{x_I}$ only, e.g. if all the measures $m_{\overline{x_I}}$ have full support. However, we would like to stress that in general the supports do not agree for different $\overline{x_I}$ and thus the supremum might depend on $x_I$, especially in situations which incorporate some kind of exclusion. A typical example is the disintegration of the measure on $\{1,\ldots, n\}^n$ given by the push-forward of the uniform random permutation under $\sigma \mapsto (\sigma(i))_{i \in \{1,\ldots,n\}}$, for which any disintegration is a Dirac measure on one point, see also section \ref{section:Applications:randompermutations}, and more generally for any $I \subset \{1,\ldots,n\}$ the Markov kernel $m_{\overline{x_I}}$ is concentrated on $\{1,\ldots,n\} \backslash \{ x_I \}$. \par
In the independent case, it is unnecessary to use the disintegration theorem for Polish spaces. Instead, one can simply define $m_{\overline{x_I}} = \otimes_{i \in I} \mu_i$ independent of $\overline{x_I}$, see the previous results by S.G. Bobkov, F. G\"otze and H. Sambale \cite{BGS17}. The definitions then coincide.
\end{rem*}

To prove Theorem \ref{Thm:d-th-order-conc} we shall need two ingredients: a pointwise estimate on consecutive differences as well as control on the growth of moments under a logarithmic Sobolev inequality.  
\begin{lemma}	\label{Lemma:pointwise-estimate-difference}
	For any $f \in L^\infty(\mu)$ and any $d \ge 1$ we have the pointwise estimate
	\begin{equation}
		\abs{\mathfrak{h} \abs{\mathfrak{h}^{(d)}f} } \le \abs{\mathfrak{h}^{(d+1)} f}.
	\end{equation}
\end{lemma}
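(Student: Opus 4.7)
The plan is to unwind the definitions carefully. Fix a point $x$ and a subset $J \in \mathcal{I}$. By Definition \ref{FirstOrderDiff}(ii),
\[
\mathfrak{h}_J \abs{\mathfrak{h}^{(d)}f}(x) = \tfrac{1}{\sqrt{2}} \bigl\lVert \abs{\mathfrak{h}^{(d)}f(\overline{x_J}, y_J)} - \abs{\mathfrak{h}^{(d)}f(\overline{x_J}, z_J)} \bigr\rVert_{L^\infty(m_{\overline{x_J}} \otimes m_{\overline{x_J}})},
\]
where $\abs{\cdot}$ denotes the Euclidean tensor norm on $\mathcal{I}^d$. The first step is to apply the reverse triangle inequality in $\ell^2(\mathcal{I}^d)$, which gives pointwise in $(y_J, z_J)$
\[
\bigl| \abs{\mathfrak{h}^{(d)}f(\overline{x_J}, y_J)} - \abs{\mathfrak{h}^{(d)}f(\overline{x_J}, z_J)} \bigr|^2 \le \sum_{I_1,\ldots,I_d \in \mathcal{I}} \bigl( \mathfrak{h}_{I_1\ldots I_d}f(\overline{x_J}, y_J) - \mathfrak{h}_{I_1\ldots I_d}f(\overline{x_J}, z_J) \bigr)^2.
\]

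Next I would pass to the essential supremum. For a nonnegative function $g$ on a product measure space with $g^2 = \sum_\alpha g_\alpha^2$, one has $\lVert g \rVert_\infty^2 \le \sum_\alpha \lVert g_\alpha \rVert_\infty^2$, simply because $g_\alpha^2 \le \lVert g_\alpha \rVert_\infty^2$ pointwise and summation preserves the inequality. Applying this with $g_\alpha = \mathfrak{h}_{I_1\ldots I_d}f(\overline{x_J}, y_J) - \mathfrak{h}_{I_1\ldots I_d}f(\overline{x_J}, z_J)$ indexed by $\alpha = (I_1,\ldots,I_d)$, and using the definition of $\mathfrak{h}_J$ again, we get
\[
\bigl( \mathfrak{h}_J \abs{\mathfrak{h}^{(d)}f}(x) \bigr)^2 \le \sum_{I_1,\ldots,I_d \in \mathcal{I}} \bigl( \mathfrak{h}_J \mathfrak{h}_{I_1\ldots I_d}f(x) \bigr)^2 = \sum_{I_1,\ldots,I_d \in \mathcal{I}} \bigl( \mathfrak{h}_{J I_1\ldots I_d}f(x) \bigr)^2.
\]

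Summing over $J \in \mathcal{I}$ then yields
\[
\abs{\mathfrak{h} \abs{\mathfrak{h}^{(d)}f}}^2(x) = \sum_{J \in \mathcal{I}} \bigl( \mathfrak{h}_J \abs{\mathfrak{h}^{(d)}f}(x) \bigr)^2 \le \sum_{J, I_1,\ldots,I_d \in \mathcal{I}} \bigl( \mathfrak{h}_{J I_1\ldots I_d}f(x) \bigr)^2 = \abs{\mathfrak{h}^{(d+1)}f}^2(x),
\]
which is the claim. The only thing that requires a moment of care is the exchange of the Euclidean norm and the $L^\infty$ norm in the second step; everything else is the reverse triangle inequality together with the definitions. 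There is no real obstacle.
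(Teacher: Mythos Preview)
Your argument is correct and is essentially the same as the paper's proof: fix a coordinate $J$, apply the reverse triangle inequality for the Euclidean norm, then push the $L^\infty$ norm through the sum (the paper phrases this as the triangle inequality for $\lVert\cdot\rVert_{L^\infty}$, you phrase it as $\lVert g\rVert_\infty^2 \le \sum_\alpha \lVert g_\alpha\rVert_\infty^2$, but these are the same step), and sum over $J$. There is nothing to add.
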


\begin{proof}
	Let $I \in \mathcal{I}$ and $x \in S$ be fixed and write $\norm{\cdot}_{I,x}$ for $L^\infty(m_{\overline{x_I}} \otimes m_{\overline{x_I}})$. Using the reverse triangle inequality for $\abs{\cdot}$ and the triangle inequality for $\norm{\cdot}_{I,x}$ we obtain
	\begin{align*}
	(\fh_I \abs{\mathfrak{h}^{(d)} f})^2 &= \frac{1}{2} \norm*{\abs*{\fh^{(d)} f}(\overline{x_I},y) - \abs*{\fh^{(d)} f}(\overline{x_I},z)}_{I,x}^2 \\ 
	&\le \frac{1}{2} \norm*{\abs*{\fh^{(d)} f(\overline{x_I},y) - \fh^{(d)} f(\overline{x_I}, z)}^2}_{I,x} \\
	&= \frac{1}{2^{d+1}}\norm*{\sum_{I_1,\ldots, I_d } \left( \fh_{I_1\ldots I_d} f(\overline{x_I}, y) - \fh_{I_1\ldots I_d} f(\overline{x_I},z) \right)^2 }_{I,x} \\
	&\le \frac{1}{2^{d+1}} \sum_{\substack{I_1,\ldots,I_d}} \left( \mathfrak{h}_I \mathfrak{h}_{I_1\ldots I_d} f \right)^2.
	\end{align*}
	Summing over $I \in \mathcal{I}$ and taking the square root yields the result.
\end{proof}

By an adaption of the case of functions on finite graphs considered by S.\,G. Bobkov \cite[Theorem 2.1]{Bob10}, which in turn is based on arguments going back to L. Gross \cite{Gr75} as well as S. Aida and D. Stroock \cite{AS94}, we have the following result.

\begin{proposition}	\label{Prop:momentinequality}
	Let $\mu$ be a measure on a product space of Polish spaces satisfying $LSI_{(\partial, \mathcal{I})}(\sigma^2)$ with constant $\sigma^2 > 0$. Then, for any $f \in L^\infty(\mu)$ and any $p \ge 2$, we have
	\begin{equation}	\label{eqn:momentinequality2}
	\lVert f \rVert_p^2 - \lVert f \rVert_2^2 \le 2 \sigma^2 (p-2) \lVert \partial f \rVert_p^2
	\end{equation}
	as well as
	\begin{equation}	\label{eqn:momentinequality3}
	\lVert f \rVert_p^2 - \lVert f \rVert_2^2 \le 2 \sigma^2 (p-2) \lVert \mathfrak{h} f \rVert_p^2.
	\end{equation}
\end{proposition}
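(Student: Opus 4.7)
The plan is to follow the Aida--Stroock--Bobkov moment growth strategy indicated by the reference to \cite{Bob10}, adapted to the disintegration-based operator $\partial$. First one reduces to $f \geq 0$: the elementary inequality $||a|-|b|| \le |a-b|$ gives $\partial_I |f|(x) \le \partial_I f(x)$ pointwise (and the same for $\mathfrak{h}_I$), while replacing $f$ by $|f|$ does not alter $\|f\|_p$ for $p\in[2,\infty)$. The $\mathfrak{h}$ version \eqref{eqn:momentinequality3} follows at the end from \eqref{eqn:momentinequality2} via the pointwise inequality
\begin{equation*}
(\partial_I f)^2(x) = \tfrac{1}{2}\!\int (f(x)-f(\overline{x_I},y_I))^2\,dm_{\overline{x_I}}(y_I) \le \tfrac{1}{2}\esssup_{y_I,z_I}(f(\overline{x_I},y_I)-f(\overline{x_I},z_I))^2 = (\mathfrak{h}_I f)^2(x),
\end{equation*}
so I only need to prove \eqref{eqn:momentinequality2}.

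The core step is to apply $\mathrm{LSI}_{(\partial,\mathcal{I})}(\sigma^2)$ to $g = f^{p/2}$, yielding $\Ent_\mu(f^p) \le 2\sigma^2\,\mathbb{E}_\mu|\partial f^{p/2}|^2$. I then control $|\partial f^{p/2}|^2$ by means of a discrete chain rule: for $a,b\ge 0$ and $p\ge 2$, $(a^{p/2}-b^{p/2})^2 \le (p/2)^2(a\vee b)^{p-2}(a-b)^2 \le (p^2/4)(a^{p-2}+b^{p-2})(a-b)^2$. Applying this inside the integral defining $\partial_I f^{p/2}$, integrating against $\mu$, and exploiting the symmetry of the kernel $m_{\overline{x_I}}\otimes m_{\overline{x_I}}$ against $\mu$ under the swap $x_I \leftrightarrow y_I$ (which uses that $\mu$ disintegrates as $m_{\overline{x_I}}(dx_I)\,d\overline{\mu_I}(d\overline{x_I})$), both the $a^{p-2}$ and the $b^{p-2}$ contributions collapse to a single term. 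Summing over $I\in\mathcal{I}$ and applying H\"older with exponents $p/(p-2)$ and $p/2$ yields
\begin{equation*}
\mathbb{E}_\mu|\partial f^{p/2}|^2 \;\le\; \tfrac{p^2}{4}\,\mathbb{E}_\mu\bigl[f^{p-2}|\partial f|^2\bigr] \;\le\; \tfrac{p^2}{4}\,\|f\|_p^{p-2}\,\|\partial f\|_p^2,
\end{equation*}
so $\Ent_\mu(f^p) \le C\sigma^2 p^2 \|f\|_p^{p-2}\|\partial f\|_p^2$ with a numerical constant $C$.

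The last step is to convert this entropy estimate into the claimed moment bound. Setting $u(p)\coloneqq\|f\|_p^2$ and using the standard identity $u'(p) = 2\,\Ent_\mu(f^p)/(p^2\|f\|_p^{p-2})$, the previous bound becomes the differential inequality $u'(p) \le 2\sigma^2\|\partial f\|_p^2$. Integrating over $[2,p]$ and invoking monotonicity $\|\partial f\|_q \le \|\partial f\|_p$ for $q\le p$ gives $u(p)-u(2)\le 2\sigma^2(p-2)\|\partial f\|_p^2$, which is \eqref{eqn:momentinequality2}. The main obstacle I expect is the bookkeeping in the chain-rule step: the naive pointwise bound $(a\vee b)^{p-2}\le a^{p-2}+b^{p-2}$ introduces an $f(\overline{x_I},y_I)^{p-2}$ term evaluated at the updated point rather than at $x$, and eliminating it cleanly requires the symmetrization argument afforded by the disintegration (Proposition \ref{Prop:DisintegrationTheorem}); once this is handled, the remaining steps are routine.
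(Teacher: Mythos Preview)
Your proposal is correct and follows essentially the same Aida--Stroock--Bobkov strategy as the paper: apply the LSI to $|f|^{p/2}$, use a discrete chain rule to bound $\Ent(|f|^p)$ by $\sigma^2 p^2\,\mathbb{E}[|f|^{p-2}|\partial f|^2]$, apply H\"older, and integrate the resulting differential inequality for $\|f\|_p^2$. The only technical variation is in the step you flag as the ``main obstacle'': instead of bounding $(a^{p/2}-b^{p/2})^2$ by $\frac{p^2}{4}(a^{p-2}+b^{p-2})(a-b)^2$ and symmetrizing, the paper first rewrites $\mathbb{E}_\mu|\partial g|^2 = \sum_I \iint (g(x)-g(\overline{x_I},z_I))_+^2\,dm_{\overline{x_I}}\,d\mu$ and then uses the one-sided inequality $(a^{p/2}-b^{p/2})_+^2 \le \frac{p^2}{4}a^{p-2}(a-b)^2$, which places the weight $|f|^{p-2}$ at the outer point directly; both routes exploit the same disintegration symmetry and yield the same constant (note that in your symmetrization the two contributions are \emph{equal}, not collapsing to one, so your displayed $\tfrac{p^2}{4}$ should be $\tfrac{p^2}{2}$, after which your final $u'(p)\le 2\sigma^2\|\partial f\|_p^2$ is exactly right).
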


\begin{rem*}
	Actually, up to a constant, $LSI_{(\partial,\mathcal{I})}(\sigma^2)$ is equivalent to \eqref{eqn:momentinequality2}, which has also been remarked by S. G. Bobkov in \cite{Bob10}.
\end{rem*}

\begin{proof}
	Let $p > 0$, and let $f$ be any measurable function on an arbitrary probability space such that $0 < \lVert f \rVert_{p+\varepsilon} < \infty$ for some
	$\varepsilon > 0$. Then, we have the general formula
	\begin{equation}
		\frac{d}{dp} \lVert f \rVert_p = \frac{1}{p^2} \lVert f \rVert_p^{1-p} \Ent (|f|^p).
	\end{equation}
	In particular, it follows that
	\begin{equation}	\label{eqn:derivativeLpsquared}
		\frac{d}{dp} \lVert f \rVert_p^2 = \frac{2}{p^2} \lVert f \rVert_p^{2-p} \Ent (|f|^p).
	\end{equation}
	Moreover, note that for any $I \in \mathcal{I}$
	\begin{align*}
		\mathbb{E}_\mu (\partial_I f)^2 &= \frac{1}{2} \iint (f(x) - f(\overline{x_I}, y_I))^2 dm_{\overline{x_I}}(y_I) d\mu(x) = \int \Var_{m_{\overline{x_I}}}(f(\overline{x_I}, \cdot)) d\overline{\mu_I}(\overline{x_I}) \\
		&= \iiint (f(\overline{x_I}, y_I) - f(\overline{x_I},z_I))_+^2 dm_{\overline{x_I}}(z_I) dm_{\overline{x_I}}(y_I) d\overline{\mu_I}(\overline{x_I}) \\
		&= \iint (f(x) - f(\overline{x_I}, z_I))_+^2 dm_{\overline{x_I}}(z) d\mu(x).
	\end{align*}
	Therefore, it follows that
	\begin{equation}	\label{eqn:DarstellungPartialfSquared}
		\mathbb{E}_\mu |\partial f|^2 =  \sum_{I \in \mathcal{I}} \iint \left( f(x) - f(\overline{x_I},z_I) \right)_+^2 dm_{\overline{x_I}}(z_I) d\mu(x)
	\end{equation}
	Now let $p > 2$ and $f$ be non-constant. (The assumption $\lVert f \rVert_{p+\varepsilon} < \infty$ is always true since $f \in L^\infty(\mu)$.) Applying the logarithmic Sobolev inequality \eqref{LSI} to the function $|f|^{p/2}$ and rewriting this in terms of $\eqref{eqn:DarstellungPartialfSquared}$ yields
	\begin{align}
		\Ent (|f|^p) &\le 2\sigma^2 \sum_{I \in \mathcal{I}} \int \left( \int \left( \abs{f}^{p/2}(x) - \abs{f}^{p/2}(\overline{x_I},y_I) \right)_+^2 dm_{\overline{x_I}}(y_I) \right) d\mu(x) \label{eqn:EntFp01} \\
		&= 2\sigma^2 \sum_{I \in \mathcal{I}} \iiint (\abs{f}^{p/2}(\overline{x_I}, x_I) - \abs{f}^{p/2}(\overline{x_I}, y_I))_+^2 dm_{\overline{x_I}}(x_I) dm_{\overline{x_I}}(y_I) d\overline{\mu_I}(\overline{x_I}) \label{eqn:EntFp02}.
	\end{align}
	Using the inequality $(a^{p/2} - b^{p/2})_+^2 \le \frac{p^2}{4} a^{p-2}(a-b)^2$ for all $a,b \ge 0$ and all $p \ge 2$, we obtain
	$$(|f|^{p/2} - |f|^{p/2}(\overline{x_I},y_I))_+^2 \le \frac{p^2}{4} (|f| - |f|(\overline{x_I},y_I))_+^2 |f|^{p-2} \le \frac{p^2}{4} (f - f(\overline{x_I},y_I))^2 |f|^{p-2},$$
	from which it follows in combination with \eqref{eqn:EntFp01} that
	\begin{equation*}
	\Ent(\abs{f}^p) \le p^2 \sigma^2 \int \abs{f}^{p-2} \sum_{I \in \mathcal{I}} (\mathfrak{d}_I f)^2 d\mu = p^2 \sigma^2 \IE_\mu \abs{f}^{p-2} \abs{\mathfrak{d}f}^2
	\end{equation*}
	and in combination with \eqref{eqn:EntFp02} that
	\[
		\Ent (|f|^p) \le p^2 \sigma^2 \mathbb{E}_\mu \lvert f \rvert^{p-2} \lvert \mathfrak{h} f \lvert^2.
	\]
	H\"older's inequality with exponents $\frac{p}{2}$ and $\frac{p}{p-2}$ applied to the last integral yields
	$$\Ent (|f|^p) \le p^2 \sigma^2 \lVert \partial f \rVert_p^2 \lVert f \rVert_p^{p-2}$$
	or
	\[
		\Ent (|f|^p) \le p^2 \sigma^2 \lVert \mathfrak{h} f \rVert_p^2 \lVert f \rVert_p^{p-2}
	\]
	respectively. Combining this with \eqref{eqn:derivativeLpsquared}, we arrive at the differential inequality $\frac{d}{dp} \lVert f \rVert_p^2 \le 2 \sigma^2
	\lVert \partial f \rVert_p^2$ or $\frac{d}{dp} \lVert f \rVert_p^2 \le 2 \sigma^2 \lVert \mathfrak{h} f \rVert_p^2$ respectively, which after integration gives \eqref{eqn:momentinequality2} and \eqref{eqn:momentinequality3}.
\end{proof}

We shall prove Theorem \ref{Thm:d-th-order-conc} by estimating the growth of moments under the conditions in the following way. Recall that if a real-valued function $f$ on some probability space $(\Omega, \mathcal{A}, \mathbb{P})$ satisfies
\begin{equation}	\label{eqn:subexpAndMomentConditions}
	\norm{f}_k \le \gamma k
\end{equation}
for any $k \in \mathbb{N}$ and some constant $\gamma > 0$, it has sub-exponential tails, i.\,e. 
\begin{equation}	\label{eqn:ExpMoment}
	\mathbb{E} e^{c|f|} \le 2 \text{ for some constant } c = c(\gamma) > 0.
\end{equation}
Here, one may take $c = \frac{1}{2 \gamma e}$. Indeed, for any $c > 0$, we have
\[
	\mathbb{E} \exp(c|f|) = 1 + \sum_{k=1}^{\infty} c^k \frac{\mathbb{E} |f|^k}{k!} \le 1 + \sum_{k=1}^{\infty} (c \gamma)^k \frac{k^k}{k!} \le 1 + \sum_{k=1}^{\infty} (c \gamma e)^k,
\]
where the last inequality follows from the fact that $k! \ge (\frac{k}{e})^k$ for all $k \in \mathbb{N}$. Inserting $c = \frac{1}{2 \gamma e}$ we arrive at \eqref{eqn:ExpMoment}.

\begin{proof}[Proof of Theorem \ref{Thm:d-th-order-conc}]
	First let $p \ge 2$. Using \eqref{eqn:momentinequality3} with $f$ replaced by $\lvert \mathfrak{h}^{(k-1)} f \rvert$ for $k = 1, \ldots, d$ and Lemma \ref{Lemma:pointwise-estimate-difference} in the second step gives
	\begin{align*}
	\lVert \mathfrak{h}^{(k-1)}f \rVert_{p}^2 &\le \lVert \mathfrak{h}^{(k-1)}f \rVert_{2}^2 + 2\sigma^2(p-2) \lVert \mathfrak{h} \lvert \mathfrak{h}^{(k-1)}f \rvert \rVert_p^2\\
	&\le \lVert \mathfrak{h}^{(k-1)}f \rVert_{2}^2 + 2 \sigma^2 (p-2) \lVert \mathfrak{h}^{(k)}f \rVert_{p}^2.
	\end{align*}
	Consequently, by iteration and applying the Poincar{\'e} inequality for $\mathfrak{h}$ we arrive at
	\begin{align}
	\lVert f \rVert_p^2 &\le \lVert f \rVert_2^2 + \sum_{k=1}^{d-1} (2\sigma^2(p-2))^k \lVert \mathfrak{h}^{(k)}f \rVert_{2}^2
	+ (2 \sigma^2 (p-2))^d \lVert \mathfrak{h}^{(d)}f \rVert_{p}^2\notag\\
	&\le \sigma^2 \lVert \mathfrak{h}f \rVert_2^2 + \sum_{k=1}^{d-1} (2 \sigma^2 (p-2))^k \lVert \mathfrak{h}^{(k)}f \rVert_{2}^2
	+ (2 \sigma^2 (p-2))^d \lVert \mathfrak{h}^{(d)}f \rVert_{p}^2\notag\\
	&\le \sum_{k=1}^{d-1} (2 \sigma^2 p)^k \lVert \mathfrak{h}^{(k)}f \rVert_{2}^2 + (2 \sigma^2 p)^d \lVert \mathfrak{h}^{(d)}f \rVert_{p}^2. \notag
	\end{align}
	Now, since $\lVert \mathfrak{h}^{(k)}f \rVert_{2} \le \min(1, \sigma^{d-k})$ for all $k = 1, \ldots, d-1$ and $\lVert \mathfrak{h}^{(d)}f \rVert_{\infty} \le 1$ by assumption, we obtain
	$$\lVert f \rVert_p^2 \le \sigma^{2d} \sum_{k=1}^{d} (2p)^k \le \frac{1}{1 - (2p)^{-1}} (2 \sigma^2 p)^d \le (3 \sigma^2 p)^d$$
	and therefore
	$$\lVert f \rVert_p \le (3 \sigma^2 p)^{d/2}$$
	Moreover, for all $p < 2$, by H\"older's and Jensen's inequality we have
	$$\lVert f \rVert_p \le \lVert f \rVert_2 \le (6 \sigma^2)^{d/2}.$$
	Considering $p = 2k/d$, $k = 1, 2, \ldots$ yields
	$$\lVert \lvert f \rvert^{2/d} \rVert_k \le 6 \sigma^2 \frac{1}{d} k,\qquad k \ge d,$$
	and
	$$\lVert \lvert f \rvert^{2/d} \rVert_k \le 6 \sigma^2 = 6 \sigma^2 \frac{1}{k}k,\qquad k \le d-1.$$
	It follows that
	$$\lVert \lvert f \rvert^{2/d} \rVert_k \le \gamma k$$
	for all $k \in \mathbb{N}$, where $\gamma = 6 \sigma^2 \max(1, 1/2, \ldots, 1/(d-1), 1/d) = 6 \sigma^2$.
	In view of \eqref{eqn:subexpAndMomentConditions}, this completes the proof.
\end{proof}

\section{Applications}		\label{section:Applications}
\subsection{Ising model}	\label{section:Applications:Ising}
Let $S_n = \{ -1,+1 \}^n$ be the configuration space of the Ising model on $n$ sites, $J = (J_{ij})$ a symmetric matrix with vanishing diagonal, $h \in \IR^n$ and define $\pi: S_n \to \IR$ via
\begin{equation} 
    \pi(\sigma) = \exp\left(\frac{1}{2}\skal{\sigma, J\sigma} + \skal{h,\sigma} \right) = \exp\left( \frac{1}{2} \sum_{i,j} J_{ij} \sigma_i \sigma_j + \sum_{i=1}^n h_i \sigma_i \right). 
\end{equation}
Equip $S_n$ with the Gibbs measure $q^n(\sigma) = Z^{-1} \pi(\sigma)$, with $Z$ being the normalization constant. For each $i \in \{1, \ldots, n\}$ denote by $T_i: S_n \to S_n$ the operator which switches the sign of the $i$-th coordinate.

\begin{rem*}
	The factor $\frac{1}{2}$ corresponds to the fact that we made the matrix symmetric, i.e. $J = \tilde{J} + \tilde{J}^T$, where $\tilde{J}$ is the upper triangular matrix. This is consistent with the Curie-Weiss model in \cite[Example 2.1]{CD10} or \cite{BN17}, but not with \cite{GLP17}.
\end{rem*}

We would like to use an approximate tensorization of entropy result proven by K. Marton in \cite{Ma15} and the results from the last section to obtain concentration inequalities for polynomials in weakly dependent random variables, more specifically for Ising models which are sufficiently close to being product measures, i.e. which satisfy the condition of Proposition \ref{Ising:Prop:LSI}. The local specifications of the Ising model (i.e. the conditional probabilities) $q_i(\cdot \mid \overline{\sigma_i})$ for $\overline{\sigma_i} \in \overline{S_i}$ are given by 
\begin{equation} \label{Ising:eqn:conditionalProb}
	q_i(\cdot \mid \overline{\sigma_i}) = \frac{\pi(\overline{\sigma_i}, \cdot)}{\pi(\overline{\sigma_i}, 1) + \pi(\overline{\sigma_i}, -1)},
\end{equation}
which can be written as
\begin{equation}	\label{Ising:eqn:condtionalProbWithtanh}
	q_i(1 \mid \overline{\sigma_i}) = \frac{1}{2} \left( 1 + \tanh(\sigma_i \sum_j J_{ij} \sigma_j + h_i \sigma_i) \right).
\end{equation}

More generally, given any $I \subset \{1,\ldots,n\}$, we can define $q_I(\cdot \mid \overline{\sigma_I})$ as the probability measure on $\{ -1, +1\}^I$ given by normalizing $q^n(\cdot, \overline{\sigma_I})$. For $I = \{1,\ldots,n\} \backslash \{ j \}$ we also write $\overline{q_j}$. \par
In \cite{Ma15}, the author proves an approximate tensorization property of the relative entropy with respect to a fixed measure $q^n$ (which in our case will be the Gibbs measure given above) in the sense that
\begin{align}
	\Ent_{q^n}(f) \le \frac{2C}{\beta} \sum_{i = 1}^n \int \Ent_{q_i(\cdot \mid \overline{y_i})}(f(\overline{y_i}, \cdot)) d\overline{q_i}(\overline{y_i})	
\end{align}
holds under certain conditions. Here $\beta$ is the minimal conditional probability and $C$ is a constant which depends on the interdependence matrix. However in the proof of \cite[Theorem 1]{Ma15} there is a small oversight, hence (and for the sake of completeness) we include a full exposition of the proof in section \ref{section:ApproximateTensorization}, see Theorem \ref{AT:Thm:ApproximateTensorization}. Moreover, \cite[Theorem 2]{Ma15} replaces one of the conditions of \cite[Theorem 1]{Ma15} by another condition, which is easier to check, see Theorem \ref{AT:Thm:ApproximateTensorization} $(iii)$. Indeed, this condition holds via bounds on the operator norm of the coupling matrix $A = (A_{ik})_{i \neq k}$ defined as
	\[
	A_{ik} \coloneqq \sup_{\substack{x,z\in S_n \\ \overline{x_k} = \overline{z_k}}} d_{TV}\left( q_i(\cdot \mid \overline{x_i}), q_i(\cdot \mid \overline{z_i}) \right).
	\] 
Thus, provided that $\norm{A}_{2 \to 2} < 1$, an approximate tensorization property holds with $C = (1-\norm{A}_{2 \to 2})^{-2}$.

\begin{lemma}	\label{Ising:Lemma:Prop15impliesMarton}
	Let $q^n$ be an Ising model with an interaction matrix $J$ satisfying $J_{ii} = 0$ and $\norm{J}_{1 \to 1} \le 1-\alpha$. Then $\norm{A}_{2 \to 2} \le \norm{J}_{1 \to 1} \le 1-\alpha$ holds for $q^n$. \par
	Moreover, if $\abs{h} \le \tilde{\alpha}$ for some $\tilde{\alpha}$ independent of $n$, then
	\[ q_i(\cdot \mid \overline{\sigma_i}) \in (c_{\alpha,\tilde{\alpha}}, 1-C_{\alpha,\tilde{\alpha}}) \] 
	for some $c_{\alpha,\tilde{\alpha}}, C_{\alpha,\tilde{\alpha}}$ depending only on $\alpha$ and $\tilde{\alpha}$, uniformly in $i, n$ and $\overline{\sigma_i}$.
\end{lemma}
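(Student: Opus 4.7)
My plan is to treat the two assertions separately. For the norm bound, the starting point is that each conditional $q_i(\cdot \mid \overline{\sigma_i})$ is a Bernoulli measure on $\{-1,+1\}$, so the total variation reduces to $\abs{q_i(1 \mid \overline{x_i}) - q_i(1 \mid \overline{z_i})}$. Using the explicit expression \eqref{Ising:eqn:condtionalProbWithtanh} together with the fact that $\tanh$ is $1$-Lipschitz, for configurations $x,z$ agreeing outside a coordinate $k \neq i$ the arguments of $\tanh$ differ by $J_{ik}(x_k - z_k)$, whose absolute value is at most $2\abs{J_{ik}}$. A direct computation then gives the entrywise estimate $A_{ik} \le \abs{J_{ik}}$.

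Given this entrywise domination, I will finish via interpolation between the $\ell^1$- and $\ell^\infty$-operator norms. Since $J$ is symmetric, the maximal row sum and the maximal column sum of $\abs{J}$ both coincide with $\norm{J}_{1 \to 1}$, and so by entrywise comparison the analogous quantities for $A$ are also bounded by $\norm{J}_{1 \to 1}$. The standard Riesz--Thorin (equivalently, Schur-test) bound $\norm{A}_{2 \to 2} \le \sqrt{\norm{A}_{1 \to 1}\, \norm{A}_{\infty \to \infty}}$ then delivers $\norm{A}_{2 \to 2} \le \norm{J}_{1 \to 1} \le 1-\alpha$.

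For the second assertion I will simply read off two-sided bounds directly from \eqref{Ising:eqn:condtionalProbWithtanh}: the argument of $\tanh$ in that formula has absolute value at most $\norm{J}_{1 \to 1} + \norm{h}_\infty \le 1-\alpha + \tilde\alpha =: M$, so $\abs{\tanh(\cdot)} \le \tanh(M) < 1$. Consequently both $q_i(1 \mid \overline{\sigma_i})$ and $q_i(-1 \mid \overline{\sigma_i})$ lie in $[\tfrac{1-\tanh(M)}{2}, \tfrac{1+\tanh(M)}{2}]$, yielding the claimed uniform bounds with constants depending only on $\alpha$ and $\tilde\alpha$.

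The only genuinely delicate point is in the first part: since the coupling matrix $A$ need not be symmetric, a direct spectral radius argument on $A$ is unavailable, which is why the interpolation between $\ell^1$ and $\ell^\infty$ norms is necessary. The symmetry of $J$ enters precisely to make its row and column sums coincide, so that both $\norm{A}_{1\to 1}$ and $\norm{A}_{\infty\to\infty}$ can be controlled by the single quantity $\norm{J}_{1\to 1}$.
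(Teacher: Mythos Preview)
Your proof is correct and follows essentially the same route as the paper: the entrywise bound $A_{ik}\le\abs{J_{ik}}$ via the $1$-Lipschitz property of $\tanh$ applied to \eqref{Ising:eqn:condtionalProbWithtanh}, followed by the interpolation estimate $\norm{A}_{2\to2}\le\sqrt{\norm{A}_{1\to1}\norm{A}_{\infty\to\infty}}$ (the paper writes this as $\sqrt{\norm{A}_{1\to1}\norm{A^T}_{1\to1}}$, which is the same thing) together with the symmetry of $J$; the second part is likewise the same direct bound on the argument of $\tanh$.
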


\begin{proof}
Let $i \neq k$ be fixed and $z,y \in S_n$ be such that $y$ and $z$ differ in the $k$-th coordinate only, i.e. $y = T_k z$. Define $\sigma \coloneqq (\overline{z_i}, 1)$ and $h_i(\sigma) \coloneqq \sigma_i \sum_j J_{ij} \sigma_j + h_i \sigma_i$. We have by equation \eqref{Ising:eqn:condtionalProbWithtanh} and the $1$-Lipschitz property of $\tanh$
\begin{align*}
	d_{TV}\left( q_i( \cdot \mid \overline{z_i}), q_i( \cdot \mid \overline{y_i}) \right) &= \abs{q_i(1 \mid \overline{z_i}) - q_i(1 \mid \overline{y_i})} 
    = \frac{1}{2} \abs{\tanh(h_i(\sigma)) - \tanh(h_i(T_k \sigma))} \\
    &\le \frac{1}{2} \abs{h_i(\sigma) - h_i(T_k \sigma)} = \abs{J_{ki}}. 
\end{align*}
Thus we have $A_{ij} \le \abs{J_{ij}}$. \par 
The inequality is a simple consequence of 
\[
    \norm{A}_{2 \to 2} \le \sqrt{\norm{A}_{1 \to 1} \norm{A^T}_{1 \to 1}} \le  \sqrt{\norm{J}_{1 \to 1}\norm{J^T}_{1 \to 1}} \le 1-\alpha
\]
which follows from the general estimate $\abs{\lambda_i(JJ^T)} \le \norm{JJ^T} \le \norm{J} \norm{J^T}$ for any operator norm and $J = J^T$. \par
The second statement follows easily by using equation \eqref{Ising:eqn:condtionalProbWithtanh} and the estimates $\max_i \abs{h_i(\sigma)} \le \norm{J}_{1 \to 1} \le 1-\alpha$ and $\abs{h_i} \le \tilde{\alpha}$.
\end{proof}

To be able to prove Proposition \ref{Ising:Prop:LSI}, we will require analogue of Proposition \ref{Prop:momentinequality} for Markov kernels. This will be used on the ``local level'' after the tensorization procedure, enabling us to derive both inequalities \eqref{Ising:eqn:LSI} and \eqref{Ising:eqn:momentinequality}.

\begin{lemma}	\label{Ising:Lemma:MarkovKernelLSI}
	Let $K$ be a Markov kernel on a finite set $\mathcal{X}$, reversible w.r.t. $\pi$ and assume that $\pi$ satisfies a logarithmic Sobolev inequality with a constant $\sigma^2$, i.e.
	\[
		\Ent_\pi(f^2) \le 2\sigma^2 \int \sum_{y \in \mathcal{X}} (f(x) - f(y))^2 K(x,y) d\pi(x).
	\]
	For $p \ge 2$ we obtain
	\begin{align}
		\Ent_\pi(\abs{f}^p) &\le \sigma^2 p^2 \int \abs{f}^{p-2} \abs{\partial f}^2 d\pi \\
		\Ent_{\pi}(\abs{f}^p) &\le \sigma^2 p^2 \norm{f}_p^{p-2} \norm{\partial f}_p^2,
	\end{align}
	where $\abs{\partial f}(x) = \left( \sum_{y \in \mathcal{X}} (f(x) - f(y))^2 K(x,y) \right)^{1/2}$.
\end{lemma}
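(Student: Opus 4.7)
The plan is to mimic the argument of Proposition \ref{Prop:momentinequality} essentially verbatim, with the role of the disintegration kernels $m_{\overline{x_I}}$ replaced by the reversible kernel $K$. The reversibility $\pi(x)K(x,y) = \pi(y)K(y,x)$ will play exactly the same symmetrization role that the product structure of $m_{\overline{x_I}} \otimes m_{\overline{x_I}}$ plays in the earlier proof, which is what makes the argument go through for a general reversible kernel rather than only for Gibbs samplers.

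First I would apply the logarithmic Sobolev inequality directly to the function $g = |f|^{p/2}$, obtaining
\[
\Ent_\pi(|f|^p) \le 2\sigma^2 \int \sum_{y \in \mathcal{X}} (|f|^{p/2}(x) - |f|^{p/2}(y))^2 K(x,y)\, d\pi(x).
\]
To pass from $|f|^{p/2}$-differences to $f$-differences, I would use the elementary pointwise inequality $(a^{p/2} - b^{p/2})_+^2 \le \tfrac{p^2}{4} a^{p-2}(a-b)^2$ valid for $a,b \ge 0$, $p \ge 2$, applied separately to the positive and negative parts. This yields
\[
(|f|^{p/2}(x) - |f|^{p/2}(y))^2 \le \tfrac{p^2}{4}\bigl[|f(x)|^{p-2} + |f(y)|^{p-2}\bigr](f(x)-f(y))^2.
\]

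The key step is then to exploit reversibility: since $\pi(x)K(x,y) = \pi(y)K(y,x)$, swapping the names of the dummy variables $x$ and $y$ shows that
\[
\int \sum_y |f(y)|^{p-2}(f(x)-f(y))^2 K(x,y)\, d\pi(x) = \int \sum_y |f(x)|^{p-2}(f(x)-f(y))^2 K(x,y)\, d\pi(x),
\]
so the two contributions coincide and combine into the single expression $\tfrac{p^2}{2}\int |f|^{p-2}|\partial f|^2\, d\pi$. Inserting this into the LSI bound gives exactly
\[
\Ent_\pi(|f|^p) \le \sigma^2 p^2 \int |f|^{p-2} |\partial f|^2\, d\pi,
\]
which is the first claimed inequality. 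For the second, I would apply H\"older's inequality with conjugate exponents $p/(p-2)$ and $p/2$ to the last integral, yielding $\int |f|^{p-2}|\partial f|^2\,d\pi \le \|f\|_p^{p-2}\|\partial f\|_p^2$, which gives the second bound.

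There is no real obstacle here: the argument is a direct transcription of the proof of Proposition \ref{Prop:momentinequality}, and the only substantive verification is that the symmetrization step, previously automatic from the symmetric product structure of $m_{\overline{x_I}} \otimes m_{\overline{x_I}}$, is now supplied by the reversibility hypothesis on $K$. The non-strict inequality $p \ge 2$ is handled at the endpoint $p=2$ by noting that both sides reduce (up to the LSI itself) to the same expression.
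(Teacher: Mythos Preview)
Your proof is correct and follows essentially the same approach as the paper: apply the LSI to $|f|^{p/2}$, use the elementary inequality $(a^{p/2}-b^{p/2})_+^2 \le \tfrac{p^2}{4}a^{p-2}(a-b)^2$, exploit reversibility of $K$ to symmetrize, and finish with H\"older. The only cosmetic difference is the order of the two middle steps---the paper first uses reversibility to pass to the positive part and then applies the elementary inequality, whereas you apply the symmetric form of the inequality first and then merge the two terms via reversibility---but the ingredients and the resulting constants are identical.
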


\begin{proof}
	Using that $(K,\mu)$ satisfies a logarithmic Sobolev inequality with constant $\sigma^2$, we obtain
	\begin{align*}
		\Ent_\pi(\abs{f}^p) &\le 2 \sigma^2 \int \sum_{y \in \mathcal{X}} \left( \abs{f}^{p/2}(x) - \abs{f}^{p/2}(y) \right)^2 K(x,y) d\pi(x) \\
		&\le 4 \sigma^2  \int \sum_{y \in \mathcal{X}} \left( \abs{f}^{p/2}(x) - \abs{f}^{p/2}(y) \right)^2_+ K(x,y) d\pi(x)
	\end{align*}
	where we have used $(a-b)^2 = (a-b)_+^2 + (b-a)_+^2$ and the fact that on $\{ f(y) > f(x) \}$, reversibility gives $\pi(x) K(x,y) = \pi(y) K(y,x)$ and we can exchange the roles of $x$ and $y$. For $a, b \ge 0$ the inequality $(a^{p/2} - b^{p/2})_+^2 \le \frac{p^2}{4} a^{p-2} (a-b)^2$ gives
	\[
		\Ent_\pi(\abs{f}^p) \le p^2 \sigma^2 \int \abs{f}^{p-2}(x) \sum_{y \in \mathcal{X}} (f(x) - f(y))^2 K(x,y) d\pi(x) = p^2 \sigma^2 \int \abs{f}^{p-2} \abs{\partial f}^2 d\pi.
	\]
	An application of H\"older's inequality yields the second inequality.
\end{proof}

We are now ready to prove Proposition \ref{Ising:Prop:LSI}, i.e. the logarithmic Sobolev inequality \eqref{Ising:eqn:LSI} and the moment inequality \eqref{Ising:eqn:momentinequality}.

\begin{proof}[Proof of Proposition \ref{Ising:Prop:LSI}]
	We can apply Lemma \ref{Ising:Lemma:Prop15impliesMarton} to see that by Theorem \ref{AT:Thm:ApproximateTensorization}(iii) we have for some $\beta = \beta(\alpha, \tilde{\alpha})$
	\begin{equation}	\label{Ising:eqn:ApproximateTensorization}
		\Ent_{q^n}(f^2) \le \frac{2}{\alpha^2 \beta} \sum_{i = 1}^n \int \Ent_{q_i(\cdot \mid \overline{y_i})}(f^2(\overline{y_i}, \cdot)) d\overline{q_i}(\overline{y_i}),
	\end{equation}
	so that it remains to find a uniform bound for the entropy given $\overline{y_i}$. To this end, fix $i \in \{1, \ldots, n \}$, $\overline{y_i} \in \{ -1, +1\}^{n-1}$ and to lighten notation write $q(\cdot) \coloneqq q_i(\cdot \mid \overline{y_i})$. $q$ is a measure on $\{ -1, +1\}$ and the Markov chain given by $K(x_0,x_1) = q(x_1)$ is reversible w.r.t. $q$. By \cite[Theorem A.1]{DSC96} (see also \cite[Example 3.8]{BT06}) $(K,q)$ satisfies a logarithmic Sobolev inequality with a constant depending on $q_* = \min_{x \in \{ -1, +1\}} q(x)$. However, this constant is bounded from below by Lemma \ref{Ising:Lemma:Prop15impliesMarton} uniformly in $\overline{y_i} \in S_{n-1}$ and $n \in \IN$. Thus, we have
	 \begin{align}	\label{Ising:1dimLSI}
	 	\Ent_q(f^2) \le C \iint (f(x) - f(y))^2 dq(x) dq(y). 
	 \end{align}
	 Inserting (\ref{Ising:1dimLSI}) into (\ref{Ising:eqn:ApproximateTensorization}) yields for some constant $C = C(\alpha, \tilde{\alpha})$
	 \begin{align*}
	 	\Ent_{q^n}(f^2) \le C \sum_{i = 1}^n \iiint (f(\overline{y_i},x) - f(\overline{y_i},y))^2 dq(x) dq(y) d\overline{q_i}(\overline{y_i}) = 2C \IE_{q^n} \abs{\partial f}^2,
	 \end{align*}
	 which proves a logarithmic Sobolev inequality for $q^n$. \par
	 To prove equation \eqref{Ising:eqn:momentinequality}, we shall make use of Lemma \ref{Ising:Lemma:MarkovKernelLSI} to first establish
	\begin{equation}	\label{Ising:eqn:EntVsPartial}
		\Ent_{q^n}(\abs{f}^p) \le C \frac{p^2}{2} \norm{f}_p^{p-2} \norm{{\partial} f}_p^2 \le C \frac{p^2}{2} \norm{f}_p^{p-2} \norm{\mathfrak{h} f}_p^2.
	\end{equation}	 
	Apply equation \eqref{Ising:eqn:ApproximateTensorization} to $\abs{f}^{p/2}$ to get
	\[
		\Ent_{q^n}(\abs{f}^p) \le C(\alpha) \sum_{i = 1}^n \IE_{\overline{q_i}} \Ent_{q_i(\cdot \mid \overline{y_i})}(\abs{f}^{p}(\overline{y_i}, \cdot)).
	\]
	Again by \cite[Theorem A.1]{DSC96} we obtain that the entropy with respect to the conditional measure has a uniformly bounded logarithmic Sobolev constant $\sigma^2(\alpha, \tilde{\alpha})$, and hence by Lemma \ref{Ising:Lemma:MarkovKernelLSI}
	\[
		\Ent_{q(\cdot \mid \overline{y_i})}(\abs{f}^{p}) \le 2\sigma^2 p^2 \int \abs{f}^{p-2}(x, \overline{y_i}) \abs{\partial f(x, \overline{y_i})}^2 dq_i(x \mid \overline{y_i}).
	\]
	Thus we can write
	\begin{align*}
		\Ent_{q^n}(\abs{f}^p) &\le C \frac{p^2}{2} \sum_{i = 1}^n \iint  \abs{\partial f(\overline{y_i}, \cdot)}^2 \abs{f(\overline{y_i}, \cdot)}^{p-2} dq_i(x \mid \overline{y_i}) d\overline{q_i}(\overline{y_i}) \\
		&= C \frac{p^2}{2} \int \abs{f(y)}^{p-2} \left( \sum_{i = 1}^n \int (f(y) - f(y_i, \overline{y_i}))^2 dq_i(y \mid \overline{y_i}) \right) dq^n(y) \\
		&= Cp^2 \IE_{q^n} \abs{f}^{p-2} \abs{\partial f}^2,
	\end{align*}
	and an application of H\"olders inequality yields equation \eqref{Ising:eqn:EntVsPartial}. \par
	Lastly, the proof of \eqref{Ising:eqn:momentinequality} is an easy adaption of the proof of Proposition \ref{Prop:momentinequality}, since the main argument was the inequality \eqref{Ising:eqn:EntVsPartial}.
\end{proof}

\begin{proof}[Proof of Theorem \ref{Ising:Thm:d-th-order-conc}]
	Theorem \ref{Ising:Thm:d-th-order-conc} is an application of Theorem \ref{Thm:d-th-order-conc}, since $q^n$ satisfies a logarithmic Sobolev inequality with respect to $\mathcal{I} = \{1,\ldots,n\}$.
\end{proof}

	
One can calculate using the reverse triangle inequality and the monotonicity of the square function as in the proof of Lemma \ref{Lemma:pointwise-estimate-difference} that for any $i_1 \neq i_2 \neq \ldots \neq i_d$
\begin{equation}	\label{Ising:eqn:mathfrakhi1id}
	(\mathfrak{h}_{i_1\ldots i_d} f)^2 \le \frac{1}{2^{d}} \abs*{\prod_{j = 1}^d (Id-T_{i_j}) f}^2
\end{equation}
holds, which also implies
\begin{equation}	\label{Ising:eqn:mathfrakhinequality}
	\abs{\mathfrak{h}^{(d)} f} \le \left( 2^{-d} \sum_{\abs{I} = d} \left(\left(\prod_{i \in I} (Id-T_i)\right)f\right)^2 \right)^{1/2},
\end{equation}
where with slight abuse of notation we write for any function $f: S_n \to \IR$ $T_i f$ for the function defined via $T_i f = f \circ T_i$ and where $T_{i_1 \ldots i_d}$ is defined via iteration. Note that on the right-hand side we deliberately chose summing over $\abs{I} = d$ instead of $i_1,\ldots,i_d$, since $\mathfrak{h}_{i_1\ldots i_d} f = 0$ if $i_j = i_k$ for some $j \neq k$. \par
For the operator appearing on the right-hand side of equation \eqref{Ising:eqn:mathfrakhinequality}, it was already shown by H. Sambale \cite{S16} and S. G. Bobkov, F. G\"otze and H. Sambale \cite[Lemma 2.2]{BGS17} that  the chain of pointwise inequalities from Lemma \ref{Lemma:pointwise-estimate-difference} holds.
%

Using this, one can infer the asymptotic behavior of $d$-th order polynomials in the spin variables of the Ising model with no external field. 

\begin{proof}[Proof of Theorem \ref{Ising:Thm:ConcPolynomials}]
	Let $f = \sum_{\abs{I} = d} a_I \sigma_I = \sum_{\abs{I} = d} a_I \prod_{i \in I} \sigma_i$ be a $d$-th order homogeneous polynomial and without loss of generality assume $\norm{A}_\infty = 1$. Consider the equation \eqref{Ising:eqn:momentinequality} from Proposition \ref{Ising:Prop:LSI}. A straightforward iteration in combination with the pointwise inequality between the $d$-th order differences from Lemma \ref{Lemma:pointwise-estimate-difference} yields
\begin{equation}	\label{Ising:eqn:f-IEflesumoverderivatives}
	\norm{f-\IE_{q^n} f}_p^2 \le \sum_{k = 1}^{d-1} p^k (2C(\alpha))^k \norm{\mathfrak{h}^{(k)} f}_2^2 + p^d (2C(\alpha))^d \norm{\mathfrak{h}^{(d)} f}_p^2.
\end{equation}
Now for any $k \in \{1,\ldots, d-1\}$ by equation \eqref{Ising:eqn:mathfrakhi1id} we have
\[
	(\mathfrak{h}_{i_1, \ldots, i_k} f)^2 \le 2^k \Big(\sum_{\substack{\abs{I} = d-k\\i_1, \ldots, i_k \notin I}} a_{I \cup i_1, \ldots, i_k} \sigma_I \Big)^2,
\]
and from \cite[Lemma 3.1]{GLP17} it follows that $\norm{\mathfrak{h}^{(k)} f}_2^2 = \sum_{i_1, \ldots, i_k} \norm{\mathfrak{h}_{i_1, \ldots, i_k} f}_2^2 \le c_k n^d$, since for each fixed $i_1,\ldots,i_k$ the integrand is a polynomial of degree at most $2(d-k)$ with coefficients bounded by $1$. Hence ultimately we obtain for any $p \ge 2$
\[
	\norm{f- \IE_{q^n} f}_p^2 \le n^d (2C(\alpha))^d \max(1, c_1, \ldots, c_{d-1}) \sum_{k = 1}^d p^k,
\]
which can be rewritten as
\[
	\norm{n^{-d/2}(f-\IE_{q^n} f)}_p \le C(\alpha,d) p^{d/2}
\]
with $C(\alpha,d) = (2C(\alpha))^{d/2}\max(1, c_1, \ldots, c_{d-1})^{1/2} d^{1/2}$, which is equivalent to the exponential integrability of $\abs{n^{-d/2}(f - \IE_{q^n} f)}^{2/d}$, i.e. for some constant $c > 0$ we have
\[
	\IE_{q^n} \exp \left( c \abs{n^{-d/2} (f- \IE_{q^n} f)}^{2/d} \right) \le 2,
\]
which by using Chebyshev's inequality results in 
\[
	q^n\left(n^{-d/2}\abs{f - \IE_{q^n} f} > t\right) \le 2 \exp\left( - \frac{t^{2/d}}{ \tilde{C}(\alpha)} \right)	
\]
for all $t > 0$, which is equivalent to the claim.
\end{proof}

\begin{rem*}
	Actually the equation \eqref{Ising:eqn:f-IEflesumoverderivatives} admits a more accurate estimate of the tail properties of $f - \IE_{q^n} f$, which has already been used in \cite[Theorem 7]{Ad06} and \cite[Theorem 3.3]{AW15}. It is based on the idea that by Chebyshev's inequality for any $p \ge 1$ we obtain
	\begin{equation}	\label{Ising:eqn:ChebyshevWithLp}
		q^n( \abs{f-\IE_{q^n} f} \ge e \norm{f-\IE_{q^n}f}_p) \le \exp(-p).
	\end{equation}
	First, observe that by taking the square root and using its subadditivity property in equation \eqref{Ising:eqn:f-IEflesumoverderivatives} we obtain 
	\[
		e\norm{f - \IE_{q^n} f}_p \le e \left( \sum_{k = 1}^{d-1} (2C(\alpha)p \norm{\mathfrak{h}^{(k)}f}_2^{2/k})^{k/2} + (2C(\alpha)p \norm{A}_2^{2/d})^{d/2} \right). 
	\]
	Now consider the function
	\[
		\eta_f(t) \coloneqq \min \left( \frac{t^{2/d}}{2C(\alpha)\norm{A}_2^{2/d}}, \min_{k = 1,\ldots,d-1} \frac{t^{2/k}}{2C(\alpha) \norm{\mathfrak{h}^{(k)} f}^{2/k}_2} \right)
	\]
	and assume that $\eta_f(t) \ge 2$, so that we can estimate
	\[
		\norm{f - \IE_{q^n}f}_{\eta_f(t)} \le \sum_{k = 1}^{d-1} t + t = (de)t.
	\]
	Applying equation \eqref{Ising:eqn:ChebyshevWithLp} to $p = \eta_f(t)$ (if $p \ge 2$)
	\[
		q^n(\abs{f- \IE_{q^n} f} \ge (de)t) \le q^n(\abs{f- \IE_{q^n} f} \ge d^{-1} \norm{f - \IE_{q^n}f}_{\eta_f(t)}) \le \exp\left( - \eta_f(t) \right)
	\]
	and combining it with the obvious estimate (in the case $p \le 2$) gives
	\[
		q^n(\abs{f- \IE_{q^n} f} \ge (de)t) \le e^2 \exp(-\eta_f(t)).
	\]
	To remove the $de$ factor, it is easiest to rescale the function by $\frac{1}{de}$ and estimating $\eta_{\frac{f}{de}}(t) \ge \frac{\eta_{f}(t)}{(de)^2}$.
\end{rem*}

Finally, let us give two examples on how to use the previous results in order to obtain more precise results on the concentration of a $d$-th order polynomial by approximating it with a lower-order polynomial. 

\begin{exa*}
	Let $A = (a_{ij})_{i,j}$ be a strictly upper triangular matrix and consider the function $\tilde{f}(\sigma) = \skal{\sigma, A\sigma} = \sum_{i<j} a_{ij} \sigma_i \sigma_j$ and $f = \tilde{f} - \IE_{q^n} \tilde{f}$.
	Defining $\tilde{a}_{ij} = a_{\min(i,j), \max(i,j)}$ we have
	\[
		\norm{\mathfrak{h} f}_2^2 \le 2 \sum_{i = 1}^n \int \left( \sum_{j = 1}^n \tilde{a}_{ij} \sigma_j \right)^2 dq^n = 2\sum_{i =1}^n \Var_{q^n} g_i \le 2C \sum_{i,j} \tilde{a}_{ij}^2 = 2C \norm{A}^2
	\]
	and 
	\[
	\mathfrak{h}_{kl} f(\sigma) \le \frac{1}{2} \abs{f(\sigma) - f(T_k \sigma) - f(T_l \sigma) + f(T_k T_l \sigma)} = 2 \abs{a_{kl}}.
	\]
	Thus we have
	\begin{align*}
		\norm{\mathfrak{h}^{(1)} f}_2 &\le 2C \norm{A} \\
		\norm{\mathfrak{h}^{(2)} f}_\infty &\le 2C \norm{A},
	\end{align*}
	so that after a renormalization by $\frac{1}{2C\norm{A}}$ the assumptions of Theorem \ref{Ising:Thm:d-th-order-conc} are satisfied and for any $t > 0$ we have
	\[
		q^n(\abs{f - \IE_{q^n} f} > t) \le 2 \exp \left( - \frac{t}{2 C\norm{A}} \right).
	\]
\end{exa*}

\begin{exa*}
	Similarly, with some modifications, one can show fluctuations of a third-order polynomial around a first-order polynomial in the following way. For any $3$-tensor $A = (a_{ijk})_{ijk}$ with the property that $a_{ijk} = 0$ unless $i < j < k$, define the matrix $\tilde{a}_{ij}^{(l)} = \begin{cases} a_{\pi(i),\pi(j), \pi(l)} & i < j \\ 0 & \text{otherwise} \end{cases}$, where $\pi$ is the unique permutation such that the three indices are ordered, and the vector $\tilde{a}_{k}^{(l,m)}$ similarly. Now let $\tilde{f} = \sum_{i,j,k} a_{ijk} \sigma_i \sigma_j \sigma_k$ and $f = \tilde{f} - \sum_{l = 1}^n \IE\left( \sum_{i,j} a_{ijl} \sigma_i \sigma_j \right) \sigma_l$. We claim that $\frac{1}{8 \norm{A}C} f$ satisfies the assumptions of Theorem \ref{Ising:Thm:d-th-order-conc} for $d = 3$. To this end, let us calculate the differences of all orders. First, using the Poincar{\'e} inequality gives
	\begin{align*}
		\norm{\mathfrak{h} f}_2^2 &= \sum_{l = 1}^n \IE_{q^n} \left( \mathfrak{h}_l f \right)^2 = 2 \sum_{l=1}^n \IE_{q^n} \left(\sum_{i,j} \tilde{a}^{(l)}_{ij} \left( \sigma_i \sigma_j -  \IE_{q^n} \sigma_i \sigma_j \right) \right)^2 \\
		&\le 2 C \sum_{l,k} \IE_{q^n} \left( g_l - g_l \circ T_k \right)^2 \le 4 C \sum_{l, k} \IE_{q^n} \left( \sum_i \tilde{a}^{(l,k)}_{i} \sigma_i \right)^2 \\
		&\le 8 C^2 \sum_{i,j,l} (\tilde{a}^{(j,l)}_{i})^2 = 8 C^2 \norm{A}^2
	\end{align*}
	as well as
	\begin{align*}
		\norm{\mathfrak{h}^{(2)} f}_2^2 = 4 \sum_{i,j} \int (\sum_k \tilde{a}^{(i,j)}_{k} \sigma_k)^2 dq^n(\sigma) \le 8C \sum_{i,j,k} (\tilde{a}^{(i,j)}_{k})^2 = 8 C \norm{A}^2.
	\end{align*}
  	Additionally, we have $\abs{\mathfrak{h}^{(3)} f}^2(\sigma) = 8 \sum_{i,k,l} a_{ikl}^2 = 8 \norm{A}^2$. Thus a normalization given by $\frac{1}{8C\norm{A}}$ is sufficient to apply Theorem \ref{Ising:Thm:d-th-order-conc}, which implies
  	\[
  		q^n\left( \abs{f} > t \right) \le 2 \exp \left( - \frac{t^{2/3}}{C \norm{A}^{2/3}} \right).
  	\]
\end{exa*}

\begin{rem*}
	The second example has an interesting interpretation since it shows that a polynomial of order three is not concentrated around its mean (which in this case would be zero), but around a first-order correction. For the case 
	\[ a_{ijk} = \begin{cases} \frac{1}{n^{3/2}} &i \neq j \neq k \\ 0 & \text{otherwise} \end{cases} \] 
	we obtain $f = n^{-1/2} \sum_{i = 1}^n \sigma_i \left( n^{-1} \sum_{j \neq i, k \neq i} (\sigma_j \sigma_k - \IE_{q^n} \sigma_j \sigma_k) \right) = n^{-1/2} \sum_{i = 1}^n \sigma_i c_i(\overline{\sigma_i})$. For $c_i$ independent of $\sigma$, first-order results of K. Marton \cite{Ma03} or the method of exchangeable pairs by S. Chatterjee \cite{Ch07} would imply that $f$ is subgaussian with variance $\norm{c}^2$. In this case, the variance fluctuates as well, and has exponential tails, and the normalization $n^{-1}$ ensures that this is the correct scaling order.
\end{rem*}

The concentration result of the second example leads to a special case of Theorem \ref{Ising:Thm:ConcPolynomials}, since the first-order correction can be controlled, as it concentrates on a different scale. However, since the coefficients in the first-order correction are growing, one needs to restrict the range for which one can expect to have stretched-exponential tails. By way of example, for $d = 3$ we obtain the easy corollary.

\begin{cor}
	There exist constants $C_1, C_2$ depending on $\alpha$ such that for all third order polynomials $f = \sum_{i,j,k} a_{ijk} \sigma_i \sigma_j \sigma_k$ with $\norm{A}_\infty \le 1$ and $a_{ijk} = 0$ if $\abs{\{ i,j,k \}} \neq 3$ and for any $t > 2C_1 n^{3/2}$ we have
	\[
		q^n(\abs{f} > t) \le 4 \exp\left( -\frac{t^{2/3}}{2C_2 n} \right).
	\]
\end{cor}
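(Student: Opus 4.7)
The plan is to follow the remark preceding the statement: decompose $f$ into the centered cubic piece treated in the second example and a linear correction, control each separately, and apply a union bound. Since the constants depend only on $\alpha$, I will work implicitly with $h = 0$, so that $\IE_{q^n}\sigma_l = 0$ and $\IE_{q^n}(\sigma_i\sigma_j\sigma_k) = 0$ whenever $i,j,k$ are distinct. Writing $g(\sigma) = \sum_{l=1}^n c_l\sigma_l$ with $c_l = \sum_{i,j} a_{ijl}\IE_{q^n}(\sigma_i\sigma_j)$ and $f_0 = f - g$, both functions have mean zero under $q^n$, and $\{|f|>t\}\subset\{|f_0|>t/2\}\cup\{|g|>t/2\}$.

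For $f_0$, I invoke the second example (after symmetrising the tensor $A$ if needed), which gives $q^n(|f_0|>t/2)\le 2\exp(-(t/2)^{2/3}/(C\norm{A}_{HS}^{2/3}))$; since at most $n^3$ entries of $A$ are nonzero and each is bounded by $\norm{A}_\infty\le 1$, one has $\norm{A}_{HS}^2\le n^3$, so this takes the form $\exp(-t^{2/3}/(C'n))$. For the linear piece $g$, I use that $|\mathfrak{h}g|^2\equiv 2\norm{c}_2^2$ is deterministic and apply Theorem \ref{Ising:Thm:d-th-order-conc} with $d=1$ to obtain the subgaussian tail $q^n(|g|>t/2)\le 2\exp(-t^2/(C\norm{c}_2^2))$.

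The crux of the argument, and the main obstacle, is a sharp $\ell^2$-bound on $c$. The naive estimate $|c_l|\le\binom{n-1}{2}$ only yields $\norm{c}_2^2=O(n^5)$ and would restrict the corollary to $t\gtrsim n^3$. To reach the threshold $n^{3/2}$ I need $\norm{c}_2^2=O(n^3)$, which should follow from Dobrushin uniqueness: Lemma \ref{Ising:Lemma:Prop15impliesMarton} combined with $\norm{J}_{1\to 1}\le 1-\alpha$ gives $\norm{A}_{1\to 1}\le 1-\alpha$, and via the classical Dobrushin covariance estimate (through the Neumann series $(I-A)^{-1}$) one has $\sum_j|\Cov(\sigma_i,\sigma_j)|\le C(\alpha)$ uniformly in $i$ and $n$. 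Since $h=0$ identifies $\IE_{q^n}(\sigma_i\sigma_j)$ with $\Cov(\sigma_i,\sigma_j)$, one gets $|c_l|\le\norm{A}_\infty\sum_{i,j}|\IE_{q^n}(\sigma_i\sigma_j)|\le C(\alpha)n$ and hence $\norm{c}_2^2\le C(\alpha)n^3$.

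Combining, the tails of $f_0$ and $g$ take the respective forms $\exp(-t^{2/3}/(C'n))$ and $\exp(-t^2/(C'n^3))$; choosing $C_1=C_1(\alpha)$ so that $t\ge 2C_1 n^{3/2}$ forces $t^2/n^3\ge t^{2/3}/n$, after which both tails are majorised by $\exp(-t^{2/3}/(2C_2 n))$ for an appropriate $C_2=C_2(\alpha)$, and the union bound supplies the factor $4$ in the claim.
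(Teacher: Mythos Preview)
Your proposal is correct and follows the same route as the paper: split $f=(f-f_1)+f_1$ with the first-order correction $f_1=\sum_l c_l\sigma_l$, bound the cubic piece via the second example and the linear piece by a subgaussian estimate, then use the union bound and the threshold $t\gtrsim n^{3/2}$ to absorb the Gaussian tail into the $t^{2/3}$ tail. The only difference is the justification of the key estimate $\norm{c}_2^2\le C(\alpha)n^3$: the paper simply invokes \cite[Lemma~3.1]{GLP17} (which bounds $(\IE_{q^n}\sum_{j,k}a_{ijk}\sigma_j\sigma_k)^2\le Cn^2$ directly), whereas you derive it from the Dobrushin covariance bound $\sum_j\lvert\Cov(\sigma_i,\sigma_j)\rvert\le C(\alpha)$---a perfectly valid alternative that is arguably more transparent here.
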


\begin{proof}
	Write $f_1 \coloneqq \sum_i \sigma_i c_i$, where $c_i = \sum_{j,k} a_{ijk} \IE_{q^n} \sigma_j \sigma_k$, for the first-order correction to  $f$. Observe that we have $\norm{A}^{2/3} \le n$ and from \cite[Lemma 3.1]{GLP17} we see that $\norm{c}^2 = \sum_i \left( \IE_{q^n} \sum_{j,k} a_{ijk} \sigma_j \sigma_k\right)^2 \le Cn^3$, so that
	\begin{align*}
		q^n(\abs{f} > t) &\le q^n\left(\abs{f-f_1} > \frac{t}{2}\right) + q^n\left(\abs{f_1} > \frac{t}{2}\right)
		\\ &\le 2 \exp\left( - \frac{t^{2/3}}{2^{2/3} C(\alpha) \norm{A}} \right) + 2 \exp\left(- \frac{t^2}{4C(\alpha) \norm{c}^2} \right) 
		\\ &\le 2 \exp\left( - \frac{t^{2/3}}{2^{2/3} C(\alpha) n} \right) + 2 \exp\left(- \frac{t^2}{4C(\alpha) C n^3}\right),
	\end{align*}
	and since $t > 2C n^{3/2}$ implies $-\frac{2^{2/3}t^{4/3}}{4Cn^2} \le -1$ we obtain
	\begin{align*}
		q^n(\abs{f} > t) \le 4 \exp\left( - \frac{t^{2/3}}{2^{2/3}C(\alpha) n} \right).
	\end{align*}
\end{proof}

Lastly, let us extend this line of thought to prove concentration of measure of polynomials of the Ising model in the presence of an external field.

\begin{proof}[Proof of Theorem \ref{Ising:Thm:ConcPolynomialsExternalField}]
	Let us prove by induction that for $p \ge 2$ we have for $f = f_{d,A}$
	\begin{equation}	\label{Ising:eqn:lpNormVsHSnorm}
		\norm{f - \IE_{q^n} f}_p^2 \le c_d p^d \norm{A}_2^2. 
	\end{equation}
	First, for $d = 1$ this is clear since $f \coloneqq f_{1,A}(X) = \sum_i a_i \tilde{X_i}$ and by equation \eqref{Ising:eqn:momentinequality} we have for $p \ge 2$
	\[
		\norm{f - \IE_{q^n} f}_p^2 \le 2Cp \norm{\mathfrak{h}f}_p^2 = 2Cp \norm{A}_2^2.
	\]
	Now for any $k$ use \eqref{Ising:eqn:momentinequality} again to get
	\begin{align*}
		\norm{f_{d,A} - \IE_{q^n} f_{d,A}}_p^2 &\le 2Cp \norm{\mathfrak{h}f}_p^2 = 2Cp \norm{\sum_{i = 1}^n (\mathfrak{h}_i f)^2}_{p/2} \le 2Cp \sum_{i = 1}^n \norm{\mathfrak{h}_i f}_p^2 \\
		&\le 2Cp \sum_{i = 1}^n c_{d-1} p^{d-1} \norm{A^{(i)}}_2^2 = 2C_d p^d \sum_{i = 1}^n \norm{A^{(i)}}_2^2 = 2C_d p^d \norm{A}_2^2.
	\end{align*}
	Here we have used the fact for any $f_{d,A}$ we have $\mathfrak{h}_i f = c_d \abs{f_{d-1, A^{(i)}} - \IE f_{d-1, A^{(i)}}}$, where $(A^{(i)})_{i_1,\ldots,i_{d-1}} = A_{i_1,\ldots,i_{d-1},i}$ is a symmetric $(d-1)$-tensor with vanishing diagonal. \par
	From equation \eqref{Ising:eqn:lpNormVsHSnorm} the first inequality easily follows as already shown in the proof of Theorem \ref{Thm:d-th-order-conc}. The second inequality is a consequence of  \[\norm{A}_2^{2/d} = \left( \sum_{i_1,\ldots, i_d} a_{i_1,\ldots,i_d}^2 \right)^{1/d} \le n \norm{A}_\infty^{2/d}. \]
\end{proof}
Note that for the case $d = 3$ and for Ising models without an external field, this translates into the previous Example, since by spin-flip symmetry we have $\IE \tilde{X_{ijk}} = \IE X_{ijk} = 0$. Additionally, for $d = 4$ we have concentration of the polynomial
\[
	f_{4,A}(X) = \sum_{ijkl} a_{ijkl}(X_{ijkl} - \IE X_{ijkl} - 6 X_{ij} \IE X_{kl} + 6 \IE X_{ij} \IE X_{kl})
\]
in absence of an external field. Here, the $6 = \binom{4}{2}$ is merely a combinatorial factor, we could also write $f_{4,A}$ in a symmetric form.

	
\subsection{Random permutations}	\label{section:Applications:randompermutations}
Next we consider random permutations which we shall describe as a probability measure on $\{1, \ldots, n\}^n$, more precisely as the uniform measure $\sigma_n$ on $S_n \coloneqq \{ (x_1, \ldots, x_n) : x_i \neq x_j \text{ for all } i \neq j \}$. With this definition it fits into our framework. \par 
Since conditioning on $n-1$ variables is useless (as the disintegrated measure will be a Dirac measure on the remaining element $x_i$ and thus a LSI cannot hold for either difference operator), we shall work with $\mathcal{I}_2 \coloneqq \{ I \subset \{1,\ldots, n\}, \abs{I} = 2 \}$. In this case, it is easy to see that for any $I = \{i,j\} \in \mathcal{I}_2$ the Markov kernel is given by $m_{\overline{x_I}} = \frac{1}{2}(\delta_{(x_i,x_j)} + \delta_{(x_j,x_i)})$, where $\{ x_i, x_j \} = \{1,\ldots,n\}\backslash \overline{x_{I}}$.  So denoting by $\tau_I \coloneqq \tau_{ij}: S_n \to S_n$ the function which switches the $i$-th and $j$-th entry, we can rewrite the difference operator as 
\[
	\partial_I f(x_1, \ldots, x_n)^2 = \frac{1}{2} \int (f(x) - f(\overline{x_I}, y_I))^2 dm_{\overline{x_I}}(y_I) = \frac{1}{4} (f(x) - f(\tau_I x))^2,
\]
and
\[
	\mathfrak{h}_I f(x)^2 = \frac{1}{2} \abs{f(x) - f(\tau_{I}x)}^2.
\]
We can rephrase \cite[Theorem 1]{LY98} in the following way.

\begin{lemma}
	Consider $(S_n, \sigma_n)$ and $\mathcal{I} = \mathcal{I}_2$. Then there exists a constant $c > 0$ independent of $n$ such that
	\[
		\Ent_{\sigma_n}(f^2) \le 2 c \frac{\log n}{n} \int \abs{\partial f}^2 d\sigma_n,
	\]
	i.e. $(S_n, \sigma_n)$ satisfies $LSI_\partial(\frac{c \log n}{n})$.
\end{lemma}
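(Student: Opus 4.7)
The plan is to deduce the statement directly from the classical Lee--Yau logarithmic Sobolev inequality for the continuous-time random transposition walk on the symmetric group, identifying $(S_n,\sigma_n)$ with the symmetric group via the bijection sending a permutation $\pi$ to the tuple $(\pi(1),\ldots,\pi(n))$, so that the swap $\tau_{ij}$ on $S_n$ corresponds to a transposition on the symmetric group.

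First, I would translate the difference operator into the random-transposition Dirichlet form. Using the explicit description of the Markov kernel $m_{\overline{x_I}}=\tfrac12(\delta_{(x_i,x_j)}+\delta_{(x_j,x_i)})$ already computed right above the lemma, one has $\partial_I f = \tfrac{1}{2}\abs{f - f\circ\tau_{ij}}$ for $I = \{i,j\}$, and hence
\[
	\IE_{\sigma_n}\abs{\partial f}^2 \;=\; \sum_{I\in\mathcal{I}_2}\IE_{\sigma_n}(\partial_I f)^2 \;=\; \frac{1}{4}\sum_{\{i,j\}}\IE_{\sigma_n}\bigl(f - f\circ\tau_{ij}\bigr)^2.
\]
On the other hand, the generator of the random transposition walk, $Lf=\sum_{\{i,j\}}(f\circ\tau_{ij}-f)$, is reversible with respect to $\sigma_n$ and its Dirichlet form equals $\mathcal{E}(f,f) = \tfrac{1}{2}\sum_{\{i,j\}}\IE_{\sigma_n}(f - f\circ\tau_{ij})^2 = 2\,\IE_{\sigma_n}\abs{\partial f}^2$.

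Second, I would invoke \cite[Theorem~1]{LY98}, which states that the logarithmic Sobolev constant $\rho_n$ of this chain satisfies $\rho_n \ge c' n/\log n$ for some universal $c'>0$. Translated into the usual entropy--Dirichlet-form formulation this reads
\[
	\Ent_{\sigma_n}(f^2) \;\le\; \frac{2}{\rho_n}\mathcal{E}(f,f) \;\le\; \frac{2\log n}{c' n}\cdot 2\,\IE_{\sigma_n}\abs{\partial f}^2 \;=\; 2\cdot\frac{2\log n}{c' n}\,\IE_{\sigma_n}\abs{\partial f}^2,
\]
which is exactly $\text{LSI}_{\partial}(c\log n/n)$ with $c = 2/c'$.

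The main obstacle is not conceptual but purely bookkeeping: matching the normalization conventions, namely the factor $\tfrac12$ inside the definition of $\partial_I$ and our convention of summing over unordered pairs $I\in\mathcal{I}_2$ without an averaging $1/\binom{n}{2}$ prefactor, against the way the Dirichlet form is normalized in \cite{LY98}. Once the two Dirichlet forms are identified up to a constant, the LSI follows by straightforward substitution, and the Poincaré inequality $\text{PI}_\partial(c\log n/n)$ is a byproduct.
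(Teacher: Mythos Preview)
Your proposal is correct and follows essentially the same approach as the paper: both simply invoke \cite[Theorem~1]{LY98} for the random transposition walk and then identify the resulting Dirichlet form with $\IE_{\sigma_n}\abs{\partial f}^2$ up to an explicit constant, using the two-point structure of the conditional measures $m_{\overline{x_I}}$. Your explicit acknowledgment that the only issue is normalization bookkeeping is exactly right, and your computation of the constant matches the paper's (modulo immaterial factors absorbed into~$c$).
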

	
\begin{proof}
	The proof is rewriting the statement of \cite[Theorem 1]{LY98} in our notation, using the fact that the conditional measures are two-point Dirac measures, as follows
	\begin{align*}
		\Ent_{\sigma_n}(f^2) &\le c \log n \frac{1}{2n} \IE_{\sigma_n} \sum_{i \neq j} (f(\tau_{ij}x) - f(x))^2 \\
		&= 2c\frac{\log n}{n} \sum_{i \neq j} \int \iint (f(\overline{x_{ij}}, x_{ij}) - f(\overline{x_{ij}}, y_{ij}))^2 dm_{\overline{x_{ij}}}(x_{ij}) dm_{\overline{x_{ij}}}(y_{ij}) d\overline{\pi_{ij}}(\overline{x_{ij}}) \\
		&= 2c\frac{\log n}{n} \int \abs{\partial f}^2 d\sigma_n.
	\end{align*}
\end{proof}

The fact that the logarithmic Sobolev constant tends to zero with $n \to \infty$ is a matter of normalization. An interpretation in the context of Markov chains requires a different normalization of the difference operator, i.e. by $\abs{\mathcal{I}}^{-1}$ (see also the Remark in section \ref{section:HigherOrderDiff}), resulting in a logarithmic Sobolev constant given by $(n-1) \log n$. Moreover, this definition of a gradient has an interesting property, since for any $I = \{i,j\} \in \mathcal{I}_2$ we obtain
\[
	\mathfrak{h}_I(\mathfrak{h}_{I} f) = \abs{\mathfrak{h}_I f(x) - \mathfrak{h}_I f(\tau_{ij}x)} = \abs{\abs{f(x) - f(\tau_{ij}x)} - \abs{f(\tau_{ij}x) - f(x)}} = 0.
\]
	
\subsection{Bernoulli-Laplace and symmetric simple exclusion process}	\label{section:Applications:BLSSE}
There are two other Markov chains, whose Dirichlet form can be described in terms of a subset $\mathcal{I}$ and first-order difference operators $\partial_I$, which are the Bernoulli-Laplace model and the symmetric simple exclusion process. \par
More specifically, define on $S_n \coloneqq \{ 0,1 \}^n$ the subset known as a slice of the hypercube $C_{n,r} = \{ x \in \{ 0,1 \}^n : \sum_i x_i = r \}$, the uniform measure $\mu_{n,r}$ on $C_{n,r}$ and the two generators acting on functions on $C_{n,r}$ as
\[
	K_{n,r}f (\eta) = \sum_{i,j} \eta_i (1-\eta_j) (f(\tau_{ij} \eta) - f(\eta)),
\]
which is the generator of the so-called Bernoulli-Laplace model, and
\[
	L_{n,r}f(\eta) = \sum_{i = 1}^n (f(\tau_{i,i+1}\eta) - f(\eta)),
\]
called the symmetric simple exclusion process, where $\tau_{ij}: S_n \to S_n$ is the switching between the $i$-th and the $j$-th coordinate and we let $\tau_{n,n+1} \coloneqq \tau_{n,1}$. In \cite[Theorem 4, Theorem 5]{LY98} sharp logarithmic Sobolev constants are derived with respect to the Dirichlet form $D_{n,r}^K(f) = -\IE_{\mu_{n,r}} fL_{n,r}f$ and $D_{n,r}^L(f) = - \IE_{\mu_{n,r}} fK_{n,r}f$ (although with different normalizations), and these correspond to logarithmic Sobolev inequalities with respect to $\partial$ in the following way.

\begin{lemma}
	For $\mathcal{I} = \mathcal{I}_{2,<} = \{ (i,j) : i < j \}$ we have $\int \abs{\partial f}^2 d\mu_{n,r} = D_{n,r}^K(f)$ and for $\mathcal{I} = \mathcal{I}_1 = \{ (i,i+1) : i \in \{1, \ldots, n\} \}$ we obtain $	\int \abs{\partial f}^2 d\mu_{n,r} = D_{n,r}^L(f).$ \par
	As a consequence, $\mu_{n,r}$ satisfies a logarithmic Sobolev inequality with respect to $(\partial, \mathcal{I}_{2,<})$ with constant $c \frac{\log \frac{n^2}{r(n-r)}}{n}$ and a logarithmic Sobolev inequality with constant $cn^2$ with respect to $(\partial, \mathcal{I}_1)$, where $c$ is a constant independent of $n$ and $r$.
\end{lemma}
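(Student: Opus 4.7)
The plan is to identify the Dirichlet forms $D_{n,r}^K$ and $D_{n,r}^L$ with $\int \abs{\partial f}^2 d\mu_{n,r}$ for the two respective choices of $\mathcal{I}$, and then to transfer the Lee--Yau log-Sobolev bounds \cite[Theorems 4,5]{LY98} into the language of $\text{LSI}_{(\partial,\mathcal{I})}$ via the definition of the difference operator.

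The first step is to compute the disintegration kernels $m_{\overline{x_I}}$ for a pair $I = \{i,j\}$. The constraint $\sum_k x_k = r$ defining the slice $C_{n,r}$ forces $x_i + x_j = r - \sum_{k \notin I} x_k \in \{0,1,2\}$, so $m_{\overline{x_I}}$ is a Dirac mass when $x_i + x_j \in \{0,2\}$ (equivalently $x_i = x_j$, in which case $\tau_{ij}x = x$) and the uniform distribution on $\{(0,1),(1,0)\}$ when $x_i + x_j = 1$. Inserting this into the definition of $\partial_I$ from Definition \ref{FirstOrderDiff} yields the pointwise identity
\[
(\partial_I f)^2(x) = \tfrac{1}{4}\,\eins_{\{x_i \ne x_j\}}(f(x) - f(\tau_{ij} x))^2.
\]
Summing over $I \in \mathcal{I}_{2,<}$ and over $I \in \mathcal{I}_1$ respectively and integrating against $\mu_{n,r}$ then gives a closed form for $\int \abs{\partial f}^2 d\mu_{n,r}$ in each case.

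The second step is the matching calculation on the generator side. For the Bernoulli--Laplace chain, the reversibility of $K_{n,r}$ with respect to $\mu_{n,r}$ together with $\tau_{ij} = \tau_{ji}$ collapses the sum over ordered pairs in $K_{n,r}$ into one over unordered pairs with effective jump rate $\eta_i(1-\eta_j) + \eta_j(1-\eta_i) = \eins_{\{\eta_i \ne \eta_j\}}$; a standard integration by parts then yields
\[
D_{n,r}^K(f) \;\propto\; \IE_{\mu_{n,r}} \sum_{i<j} \eins_{\{\eta_i \ne \eta_j\}}(f(\tau_{ij}\eta) - f(\eta))^2,
\]
which coincides with $\int \abs{\partial f}^2 d\mu_{n,r}$ up to an overall constant. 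The SSEP case is even more direct, since $\tau_{i,i+1}\eta = \eta$ whenever $\eta_i = \eta_{i+1}$ so that the indicator $\eins_{\{\eta_i \ne \eta_{i+1}\}}$ is implicit in the sum defining $L_{n,r}$.

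Once both Dirichlet forms have been recognized as constant multiples of $\int \abs{\partial f}^2 d\mu_{n,r}$, the stated logarithmic Sobolev inequalities with constants $c\log(n^2/(r(n-r)))/n$ and $cn^2$ are immediate consequences of the corresponding classical LSIs for $D_{n,r}^K$ and $D_{n,r}^L$ in \cite[Theorems 4,5]{LY98}. The only real obstacle is the bookkeeping of the various factor-$\tfrac{1}{2}$ normalization conventions --- the factor in the definition of $\partial_I$, the factor in the symmetric Dirichlet form, and Lee--Yau's normalization of the generators --- all of which are absorbed into the universal constant $c$.
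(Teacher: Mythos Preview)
Your proposal is correct and follows essentially the same route as the paper: compute the disintegration kernel $m_{\overline{x_I}}$ for two-element sets $I$ (Dirac when $x_i=x_j$, uniform on $\{(0,1),(1,0)\}$ otherwise), deduce $(\partial_I f)^2 = \tfrac14\eins_{\{x_i\neq x_j\}}(f-f\circ\tau_{ij})^2$, identify the resulting sum with $D_{n,r}^K$ (resp.\ $D_{n,r}^L$), and then invoke \cite[Theorems 4, 5]{LY98}. Your remark that the various factor-$\tfrac12$ normalizations get absorbed into the universal constant $c$ is in fact prudent---the paper asserts exact equality but is itself somewhat loose about the bookkeeping.
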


\begin{proof}
	Let us fix $n,r$ and drop all subscripts $n,r$, i.e. write $D^L$ for $D^L_{n,r}$, $D^K$ for $D^K_{n,r}$ and $\mu$ for $\mu_{n,r}$. For $\mathcal{I} = \mathcal{I}_{2,<}$ let $(i,j)$ be given and consider the projection $\pi_{ij}(x) = \overline{x_{ij}}$. We have 
	\[
	m_{\overline{x_{ij}}} = \begin{cases}
					\delta_{(1,1)}							& \sum_k (\overline{x_{ij}})_k = r-2 \\
	                        	\frac{1}{2}(\delta_{(0,1)} + \delta_{(1,0)}) 	& \sum_k (\overline{x_{ij}})_k = r-1 \\
	                        	\delta_{(0,0)} 							& \sum_k (\overline{x_{ij}})_k = r.
	                        \end{cases}
	\]
	and thus
	\[
		\int \abs{\partial f}^2 d\mu= \sum_{(i,j) \in \mathcal{I}_{2,<}} \int (\partial_{ij}f)^2 d\mu = \frac{1}{2} \sum_{(i,j)} \int (f(\eta) - f(\tau_{ij}\eta))^2 \eta_i (1-\eta_j) d\mu = D^K(f).
	\]
	In the second case note that $\pi_{i,i+1}(x) = \overline{x_{i,i+1}}$ (with the convention $(n,n+1) = (n,1)$) is just a special case of the $(i,j)$ above, and again we obtain
	\[
		\int \abs{\partial f}^2 d\mu = \sum_{i=1}^n \int (\partial_{i}f)^2 d\mu = \frac{1}{2} \sum_{i = 1}^n \int (f(\eta) - f(\tau_{i,i+1}\eta))^2 d\mu = D^L(f).
	\]	
	Note that we omit $\eta_i(1-\eta_{i+1})$ since otherwise we obtain $\tau_{i,i+1}\eta = \eta$. \par
	The logarithmic Sobolev inequality then follows from \cite[Theorem 4, Theorem 5]{LY98}, taking into account the missing renormalization.
\end{proof}
	
\section{Approximate tensorization of the relative entropy in product spaces}	\label{section:ApproximateTensorization}
In this section we shall reformulate and provide a complete proof of a result by K. Marton \cite{Ma15} and moreover rewrite it in the terms of entropy (of functions) instead of relative entropy of measures. To this end, let $\mathcal{X}$ be a finite set, $\mathcal{X}^n$ its $n$-fold product and fix a probability measure $q^n$ on $\mathcal{X}^n$, which does not necessarily need to be a product measure. Denote by $d_{TV}$ the total variation between two measures defined as
\[
	d_{TV}(\mu,\nu) \coloneqq \sup_{A \subset \mathcal{X}^n} \abs{\mu(A) - \nu(A)} = \frac{1}{2} \sum_{x \in \mathcal{X}^n} \abs{\mu(\{x\}) - \nu(\{x\})},
\]
and by $W_2$ the Wasserstein-$2$-type distance
\[
	W_2(\mu,\nu) \coloneqq \inf_{\pi \in C(\mu,\nu)} \left( \sum_{i = 1}^n \pi(x_i \neq y_i)^2 \right)^{1/2},
\]
where $C(\mu,\nu)$ is the set of all couplings of $\mu$ and $\nu$, i.e. probability measures $\pi$ on $\mathcal{X}^n \times \mathcal{X}^n$ with marginals $\mu$ and $\nu$. \par
Note that the infimum in the definition is always attained, since $C(\mu,\nu)$ is a compact subset of $\mathcal{P}(\mathcal{X}^n \times \mathcal{X}^n)$ equipped with the weak topology and the map $\pi \mapsto \left( \sum_{i =1}^n \pi(x_i \neq y_i)^2 \right)^{1/2}$ is lower semicontinuous. This fact and the gluing lemma for measures with a common marginal can be used to prove that $W_2$ is a distance function on $\mathcal{P}(\mathcal{X}^n)$, see for example \cite[Chapter 6]{Vil08} for a similar line of reasoning and \cite[Theorem 2.1]{AG13} for the gluing lemma. Denote by $\mu_i, \nu_i$ the pushforward measure under the projection onto the $i$-th coordinate of $\mu$ and $\nu$ respectively. By the subadditivity of the square root (for the upper bound for $W_2$) as well as the fact that every $\pi = \otimes_{i = 1}^n \pi_i$ on $\mathcal{X}^n \times \mathcal{X}^n$ of $\mu, \nu$ induces (by the projection onto the coordinates $x_i, y_i$) a coupling $\pi_i$ of $\mu_i, \nu_i$, we obtain 
\begin{equation}	\label{AT:eqn:ComparisonTVandW2}
\left( \sum_{i = 1}^n d_{TV}^2(\mu_i, \nu_i) \right)^{1/2} \le W_2(\mu,\nu) \le d_{TV}(\mu,\nu).
\end{equation}
Moreover, by $H(\mu \mid\mid \nu)$ we denote the relative entropy of $\mu$ with respect to $\nu$ given by $H(\mu \mid\mid \nu) = \int \frac{d\mu}{d\nu} \log \frac{d\mu}{d\nu} d\nu$ (whenever this exists). We will need the following lemma, which is also found in \cite[Lemma 2]{Ma15}.

\begin{lemma}\label{AT:Lem:RelEntropyIneq}
	Let $p \ll q$ be two measures and define $\beta \coloneqq \inf_{x \in \mathcal{X}_+} q(x)$, where $\mathcal{X}_+ \coloneqq \{ x \in \mathcal{X} : q(x) > 0 \}$. Then $H(p \mid\mid q) \le \min\left( \frac{2}{\beta} d_{TV}(p,q), \frac{4}{\beta} d_{TV}^2(p,q)\right)$.
	\end{lemma}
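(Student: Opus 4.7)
The plan is to reduce both inequalities to a single application of the elementary bound $t\log t - t + 1 \le (t-1)^2$ for $t\ge 0$ together with the lower bound $q(x)\ge \beta$ on the support of $q$. The two estimates on $d_{TV}$ differ only in how one finally controls $\sum_x (p(x)-q(x))^2$.

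First, I would rewrite the relative entropy in the standard form
\begin{equation*}
H(p\,\|\,q) \;=\; \sum_{x\in\mathcal{X}_+} q(x)\,\phi\!\left(\frac{p(x)}{q(x)}\right), \qquad \phi(t)\coloneqq t\log t - t + 1,
\end{equation*}
where I have added $-\sum_x p(x) + \sum_x q(x) = 0$ for free using $p\ll q$ and $p,q\in\mathcal{P}(\mathcal{X})$. The key pointwise inequality is $\phi(t)\le (t-1)^2$ for all $t\ge 0$, which is just $t(\log t + 1 - t)\le 0$ and hence equivalent to the elementary bound $\log t \le t-1$. Plugging this in and then using $q(x)\ge\beta$ on $\mathcal{X}_+$, I obtain
\begin{equation*}
H(p\,\|\,q) \;\le\; \sum_{x\in\mathcal{X}_+}\frac{(p(x)-q(x))^2}{q(x)} \;\le\; \frac{1}{\beta}\sum_{x\in\mathcal{X}}(p(x)-q(x))^2.
\end{equation*}

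From here the two desired bounds drop out by estimating the remaining $\ell^2$ norm against the $\ell^1$ norm in two different ways. Using $|p(x)-q(x)|\le 1$ pointwise gives $\sum_x (p-q)^2 \le \sum_x |p-q| = 2\,d_{TV}(p,q)$, which yields the linear bound $H(p\,\|\,q)\le \frac{2}{\beta}\,d_{TV}(p,q)$. Alternatively, using the sharper bound $\max_x |p(x)-q(x)|\le \sum_x |p(x)-q(x)|=2\,d_{TV}(p,q)$ gives $\sum_x(p-q)^2\le 2\,d_{TV}(p,q)\cdot 2\,d_{TV}(p,q)=4\,d_{TV}(p,q)^2$, whence $H(p\,\|\,q)\le \frac{4}{\beta}\,d_{TV}(p,q)^2$. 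Taking the minimum of the two yields the lemma.

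There is no real obstacle here; the only point to be careful about is the reduction to a sum over $\mathcal{X}_+$, which uses $p\ll q$ to discard the contribution of $\{q=0\}$, and the use of both probability measures to get $\sum_x p(x) = \sum_x q(x) = 1$ so that the added linear term in $\phi$ vanishes. The two regimes (small vs.\ large $d_{TV}$) of the minimum correspond precisely to whether one bounds $|p-q|_\infty$ by $1$ or by $2\,d_{TV}$ in the last step.
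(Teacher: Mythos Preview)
Your proof is correct and essentially identical to the paper's: both establish $H(p\mid\mid q)\le \sum_{\mathcal{X}_+}\frac{(p-q)^2}{q}$ via the elementary bound $\log t\le t-1$ (the paper phrases it as $\log(1+x)\le x$), then bound $\frac{1}{\beta}\sum(p-q)^2$ either by $\frac{2}{\beta}d_{TV}$ using $|p-q|\le 1$ or by $\frac{4}{\beta}d_{TV}^2$ using $\sum(p-q)^2\le\bigl(\sum|p-q|\bigr)^2$.
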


\begin{proof}
	The shifted logarithm $f(x) \coloneqq \log(1+x)$ is a concave function on $(-1,\infty)$, so that for any  $x \ge 0$ we have $f(x) \le f'(0)x = x$. Rewrite $\frac{p}{q} = 1 + \frac{p-q}{q}$ to obtain
	\begin{align*}
		H(p \mid \mid q) &= \sum_{\mathcal{X}_+} q \left(1 + \frac{p-q}{q} \right) f\left( \frac{p-q}{q} \right) \le \sum_{\mathcal{X}_+} q \left(1 + \frac{p-q}{q} \right) \frac{p- q}{q} = \sum_{\mathcal{X}_+} \frac{(p-q)^2}{q},
	\end{align*}
	where we have omitted writing the variable $x$. Lastly, using $ q(x)^{-1} < \beta^{-1}$ and $(p(x)-q(x))^2 \le \abs{p(x) - q(x)}$ we conclude
	\[
		H(p \mid \mid q) \le \frac{1}{\beta} \sum_{x \in \mathcal{X}} \abs{p(x) - s(x)} = \frac{2}{\beta} d_{TV}(p,q)
	\]
	or only $ q(x)^{-1} < \beta^{-1}$  to get 
	\[
		H(p \mid \mid q) \le \frac{1}{\beta} \left(\sum_{x \in \mathcal{X}} \abs{p(x) - q(x)}\right)^2 = \frac{4}{\beta} d_{TV}^2(p,q).
	\]
\end{proof}

We are now ready to prove the following result. We use the same notations as in the previous section, i.e. for any measure $p$ on $\mathcal{X}^n$ we denote by $p_I(\cdot \mid \overline{y_I})$ the conditional probability measure on $\mathcal{X}^I$ given by conditioning on $\overline{y_I}$.

\begin{thm}	\label{AT:Thm:ApproximateTensorization}
	Let $q^n$ be a measure with full support on $\mathcal{X}^n$.
	\begin{enumerate}[$(i)$]
		\item Let $\beta \coloneqq \min_{i =1, \ldots, n} \min_{x \in \mathcal{X}^n} q_i(x_i \mid \overline{x_i})$, $p^n$ a probability measure and assume that for all subsets $I \subset \{1,\ldots,n\}$ and all $\overline{y_I} \in \overline{\mathcal{X}^I}$ we have
		\begin{equation}	\label{AT:Thm:eqn:W2estimate}
			W_2^2( p_I(\cdot \mid \overline{y_I}), q_I(\cdot, \overline{y_I})) \le C \sum_{i \in I} \IE_{p_I(\cdot \mid \overline{y_I})} d_{TV}^2(p_i(\cdot \mid \overline{y_i}), q_i(\cdot \mid \overline{y_i})),
		\end{equation}
		then
		\begin{equation}	\label{AT:Thm:eqn:ConclusionEntropy}
			H(p^n \mid\mid q^n) \le \frac{2C}{\beta} \sum_{i = 1}^n \IE_{\overline{p_i}} H(p_i(\cdot \mid \overline{y_i}) \mid\mid q_i(\cdot \mid \overline{y_i}))
		\end{equation}
		\item If $f$ denotes the density of $p^n$ with respect to $q^n$, then this can be rewritten as
		\begin{equation}	\label{AT:Thm:eqn:ConclusionRelativeEntropy}
			\Ent_{q^n}(f) \le \frac{2C}{\beta} \sum_{i = 1}^n \int \Ent_{q_i(\cdot \mid \overline{y_i})}(f(\overline{y_i}, \cdot)) d\overline{q_i}(\overline{y_i}).
		\end{equation}
		\item Assume that the coupling matrix $A = (a_{ij})_{i \neq j}$ (see section \ref{section:Applications:Ising}) of $q^n$ satisfies the condition $\norm{A}_{2 \to 2} < 1$. Then \eqref{AT:Thm:eqn:W2estimate} holds with $C = (1-\norm{A}_{2 \to 2})^{-2}$, so that also \eqref{AT:Thm:eqn:ConclusionEntropy} and \eqref{AT:Thm:eqn:ConclusionRelativeEntropy} hold with the same constant.
	\end{enumerate}
\end{thm}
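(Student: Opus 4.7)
I would first observe that (ii) is equivalent to (i) by a density computation. Writing $f := dp^n/dq^n$, one has $\IE_{q^n} f = 1$ and $H(p^n \mid\mid q^n) = \Ent_{q^n}(f)$. Disintegration gives $d\overline{p_i}(\overline{y_i}) = g_i(\overline{y_i})\,d\overline{q_i}(\overline{y_i})$ with $g_i(\overline{y_i}) := \IE_{q_i(\cdot\mid\overline{y_i})} f(\overline{y_i},\cdot)$, while the density of $p_i(\cdot\mid\overline{y_i})$ with respect to $q_i(\cdot\mid\overline{y_i})$ is $f(\overline{y_i},\cdot)/g_i(\overline{y_i})$. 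A direct calculation then shows
\[
    \IE_{\overline{p_i}} H\bigl(p_i(\cdot\mid\overline{y_i}) \mid\mid q_i(\cdot\mid\overline{y_i})\bigr) = \int \Ent_{q_i(\cdot\mid\overline{y_i})}\bigl(f(\overline{y_i},\cdot)\bigr)\,d\overline{q_i}(\overline{y_i}),
\]
identifying \eqref{AT:Thm:eqn:ConclusionEntropy} with \eqref{AT:Thm:eqn:ConclusionRelativeEntropy}, so that it suffices to establish (i).

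\textbf{Part (iii) via a Gibbs-sampler contraction.} Fix $I\subset\{1,\dots,n\}$ and $\overline{y_I}$, and write $p := p_I(\cdot\mid\overline{y_I})$, $q := q_I(\cdot\mid\overline{y_I})$. I would construct a coupling of $p$ and $q$ iteratively: starting from any coupling $(X,Y)$, update each coordinate $i\in I$ via the maximal coupling of $p_i(\cdot\mid\overline{X_i},\overline{y_I})$ and $q_i(\cdot\mid\overline{Y_i},\overline{y_I})$, which preserves both marginals. The triangle inequality together with the definition of $A$ gives
\[
    \IP(X_i^{\mathrm{new}} \neq Y_i^{\mathrm{new}} \mid X,Y) \le d_{TV}\!\bigl(p_i(\cdot\mid\overline{y_i}), q_i(\cdot\mid\overline{y_i})\bigr) + \sum_{j \in I\setminus\{i\}} A_{ij}\,\mathbf{1}_{\{X_j \neq Y_j\}},
\]
so that after one full sweep the vector $e^{(k)} := (\IP(X_j^{(k)} \neq Y_j^{(k)}))_{j\in I}$ obeys $\norm{e^{(k+1)}}_2 \le \norm{A}_{2\to 2}\norm{e^{(k)}}_2 + \norm{\delta}_2$ with $\delta_j := \IE_p d_{TV}(p_j(\cdot\mid\overline{y_j}), q_j(\cdot\mid\overline{y_j}))$. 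Since $\norm{A}_{2\to 2}<1$, iteration and summation of the geometric series yield a limiting coupling with $W_2^2(p,q) \le \norm{e^\infty}_2^2 \le (1-\norm{A}_{2\to 2})^{-2}\norm{\delta}_2^2$. Jensen's inequality then gives $\norm{\delta}_2^2 \le \sum_i \IE_p d_{TV}^2(p_i(\cdot\mid\overline{y_i}), q_i(\cdot\mid\overline{y_i}))$, which is exactly \eqref{AT:Thm:eqn:W2estimate} with $C=(1-\norm{A}_{2\to 2})^{-2}$.

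\textbf{Part (i): the main obstacle.} I would split the argument into two halves joined by the hypothesis. First, Pinsker's inequality $d_{TV}^2 \le \tfrac{1}{2}H$ applied coordinate-wise inside \eqref{AT:Thm:eqn:W2estimate} with $I=\{1,\dots,n\}$ yields
\[
    W_2^2(p^n, q^n) \le \tfrac{C}{2} \sum_{i = 1}^n \IE_{\overline{p_i}} H\bigl(p_i(\cdot\mid\overline{y_i}) \mid\mid q_i(\cdot\mid\overline{y_i})\bigr).
\]
Second, the key remaining estimate is the reverse-direction bound $H(p^n\mid\mid q^n) \le \frac{4}{\beta}W_2^2(p^n,q^n)$ in which only the per-coordinate lower bound $\beta$ — and not the global minimum $\min q^n$ (which is typically exponentially small in $n$) — enters. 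Combining the two gives \eqref{AT:Thm:eqn:ConclusionEntropy} with the constant $\frac{2C}{\beta}$. To establish this reverse bound I would decompose $H(p^n \mid\mid q^n)$ via the chain rule, apply Lemma \ref{AT:Lem:RelEntropyIneq} to each one-dimensional conditional entropy (whose base measure $q_i(\cdot\mid x_{<i})$ has minimum at least $\beta$, obtained by averaging the pointwise bound $q_i(x_i\mid\overline{x_i})\ge\beta$), and use the $W_2^2$-hypothesis \emph{for subsets} $I\subsetneq\{1,\dots,n\}$ to re-express the resulting conditional total-variation distances in terms of couplings of suffixes. The main obstacle is precisely this last step: controlling conditional-marginal TV distances by the global $W_2^2$ without picking up extra factors of $n$ is what forces the hypothesis to be assumed for all subsets $I$, not only for $I=\{1,\dots,n\}$, and is also the source of the "small oversight" in \cite{Ma15} noted earlier in the excerpt. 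The bookkeeping needed to collapse the iterated constants into exactly $\frac{2C}{\beta}$ — rather than a worse geometric-series sum — is the most delicate part of the argument.
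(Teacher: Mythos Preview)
Your handling of (ii) matches the paper's, and your sketch for (iii) is actually more detailed than the paper's (which simply refers to \cite[Theorem~2]{Ma15}).

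For (i), however, there is a genuine gap. The paper does \emph{not} attempt to prove a global reverse transportation inequality $H(p^n\mid\mid q^n)\le\frac{4}{\beta}W_2^2(p^n,q^n)$, and your proposed route to it via the chain rule cannot close. The chain rule produces terms of the form $\IE_{p_{<i}}d_{TV}^2\bigl(p_i(\cdot\mid x_{<i}),q_i(\cdot\mid x_{<i})\bigr)$, where $p$ and $q$ are conditioned at the \emph{same} point $x_{<i}$; these are not controlled by $W_2^2(p^n,q^n)$, since in any coupling realizing the latter the conditionals of $p$ and $q$ sit at \emph{different} points $X_{<i}$ and $Y_{<i}$. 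Invoking the hypothesis \eqref{AT:Thm:eqn:W2estimate} for proper subsets $I$ does not rescue this either: that hypothesis gives an \emph{upper} bound on $W_2^2$ in terms of one-site TV distances, which is the wrong direction for what you need here.

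The paper's argument is instead an induction on $n$ built on the \emph{symmetrized} disintegration
\[
H(p^n\mid\mid q^n)=\frac{1}{n}\sum_{i=1}^n H(p_i\mid\mid q_i)+\frac{1}{n}\sum_{i=1}^n\IE_{p_i}H\bigl(\overline{p_i}(\cdot\mid y_i)\mid\mid\overline{q_i}(\cdot\mid y_i)\bigr).
\]
The first sum is handled by the chain of inequalities you already identified (Lemma~\ref{AT:Lem:RelEntropyIneq}, then $\sum_i d_{TV}^2(p_i,q_i)\le W_2^2(p^n,q^n)$, then \eqref{AT:Thm:eqn:W2estimate} with $I=\{1,\dots,n\}$, then Pinsker) and contributes $\frac{2C}{\beta}\cdot\frac{1}{n}$ times the target right-hand side. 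The second sum is handled by applying the induction hypothesis to each $(n{-}1)$-dimensional conditional measure $\overline{q_i}(\cdot\mid y_i)$ --- after checking that its one-site minimum is still $\ge\beta$ and that \eqref{AT:Thm:eqn:W2estimate} holds for it with the same $C$ --- and contributes $\frac{2C}{\beta}\cdot(1-\frac{1}{n})$ times the target. The two pieces sum to exactly $\frac{2C}{\beta}$: there is no geometric series and no delicate bookkeeping, the induction is precisely what makes the constant come out cleanly.
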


\begin{proof}First note that $\beta > 0$ due to the assumption of $q^n$ having full support. \par
	$(i)$: We will prove the theorem by induction. In the case $n = 1$ there is nothing to prove if one interprets $q_1(\cdot \mid \overline{y_1}) = q$. Using the disintegration theorem for the relative entropy (see for example \cite[Theorem D.13]{DZ10} for the formula) gives
	\begin{equation}
		\Ent_{q^n}(f) = \frac{1}{n} \sum_{i = 1}^n \Ent_{q _i}\left( \frac{dp_i}{dq_i} \right) + \int \Ent_{\overline{q_i}(\cdot \mid \overline{y_i})}\left( \frac{d\overline{p_i}(\cdot \mid y_i)}{d\overline{q_i}(\cdot \mid y_i)} \right) dp_i(y_i),
	\end{equation}
	which can be restated as
	\begin{equation}	\label{AT:Thm:eqn:decompositionEntropy}
		H(p^n \mid\mid q^n) = \frac{1}{n} \sum_{i = 1}^n H(p_i \mid \mid q_i) + \frac{1}{n} \sum_{i =1}^n\int H(\overline{p_i}(\cdot \mid y_i) \mid \mid \overline{q_i}(\cdot \mid y_i)) dp_i(y_i).
	\end{equation}
	We will treat the two terms separately. For the first term, using the estimate $H(p_i \mid\mid q_i) \le \frac{4}{\beta} d_{TV}^2(p_i, q_i)$ from Lemma \ref{AT:Lem:RelEntropyIneq}, \eqref{AT:eqn:ComparisonTVandW2}, \eqref{AT:Thm:eqn:W2estimate} and Pinsker's inequality gives
	\begin{align*}
		\frac{1}{n} \sum_{i = 1}^n H(p_i \mid \mid q_i) &\le \frac{4}{\beta n} \sum_{i =1}^n d_{TV}^2(p_i, q_i) \le \frac{4}{\beta n} W_2^2(p^n, q^n) \\ 
		&\le \frac{4C}{\beta n} \sum_{i = 1}^n \IE_{p^n} d_{TV}^2(p_i(\cdot \mid\overline{y_i}), q_i(\cdot \mid \overline{y_i})) \\
		&\le \frac{2C}{\beta n} \sum_{i = 1}^n \IE_{p^n} H(p_i(\cdot \mid \overline{y_i}) \mid \mid q_i(\cdot \mid \overline{y_i})).
	\end{align*}
	For the second term we use the induction hypothesis. For each fixed $i \in \{1,\ldots,n\}$ and $y_i \in \mathcal{X}$ we interpret $\overline{q_i}(\cdot \mid y_i)$ as a measure on $\overline{\mathcal{X}_i}$, for which
	\begin{align*}
		\beta(\overline{q_i}(\cdot \mid y_i)) &= \min_{j \neq i} \min_{x \in \overline{\mathcal{X}_i}}  \frac{\overline{q_i}(x \mid y_i)}{\overline{q_i}(z \in \overline{\mathcal{X}_i} : \overline{pr_j}(z) = \overline{x_j} \mid y_i)} \\ &=  \min_{j \neq i} \min_{x \in \overline{\mathcal{X}_i}} \frac{q^n(x, y_i)}{q^n(z \in \mathcal{X}^n: \overline{pr_j}(z) = \overline{x_j}, pr_i(z) = y_i)} \\
		&\ge \min_{i = 1,\ldots,n} \min_{y_i \in \mathcal{X}} \min_{x \in \overline{\mathcal{X}_i}} \frac{q^n(x,y_i)}{q^n(z \in \mathcal{X}^n: pr_i(z) = y_i)} \\
		&= \beta(q^n),
	\end{align*}
	and for which \eqref{AT:Thm:eqn:W2estimate} holds with the same constant $C$. To use \eqref{AT:Thm:eqn:ConclusionEntropy} let us write $y \in \overline{\mathcal{X}_i}$ for a generic vector. We need to find the conditional probability of the measure $\overline{q_i}(\cdot \mid y_i)$ with respect to the projection $\overline{pr_j}: \overline{\mathcal{X}_i} \to \overline{\mathcal{X}_{ij}}$ for some $j \neq i$. A short calculation shows that this is given by $p_j(y_j \mid \overline{y_j}, y_i)$, which is the conditional probability of $p^n$ given $\overline{pr_j} = (\overline{y_j}, y_i)$. Thus we obtain 
	\begin{align*}
		&\int H(\overline{p_i}(\cdot \mid y_i) \mid \mid \overline{q_i}(\cdot \mid y_i)) dp_i(y_i) \\
		&\le \frac{2C}{\beta} \sum_{y_i \in \mathcal{X}} p_i(y_i) \sum_{j \neq i} \sum_{\overline{y_{j}}} \frac{p^n(pr_i(z) = y_i, \overline{pr_j}(z) = \overline{y_{j}})}{p_i(y_i)} H( p_j(\cdot \mid \overline{y_{j}}, y_i) \mid \mid q_j(\cdot \mid \overline{y_{j}}, y_i)) \\
		&= \frac{2C}{\beta} \sum_{j \neq i} \IE_{p^n} H(p_j(\cdot \mid \overline{y_j}) \mid\mid q_j(\cdot \mid \overline{y_j})). 
	\end{align*}
	Summation over $i$ gives
	\[
		\frac{1}{n} \sum_{i =1}^n\int H(\overline{p_i}(\cdot \mid y_i) \mid \mid \overline{q_i}(\cdot \mid y_i)) dp_i(y_i) \le \frac{2C}{\beta}(1-1/n) \sum_{i = 1}^n \IE_{p^n} H(p_i(\cdot \mid \overline{y_i}) \mid\mid q_i(\cdot \mid \overline{y_i})),
	\]
	which combined with the first part yields the claim. \par
	$(ii)$: \eqref{AT:Thm:eqn:ConclusionRelativeEntropy} is a simple rewriting  of \eqref{AT:Thm:eqn:ConclusionEntropy}, noting that as a consequence of the disintegration theorem (or in this case Bayes' theorem) we have
	\[
		\frac{dp_i(\cdot \mid \overline{y_i})}{dq_i(\cdot \mid \overline{y_i})}(y_i) = \frac{f(\overline{y_i}, y_i)}{\int f(\overline{y_i}, x_i) dq_i(x_i \mid \overline{y_i})} 
	\]
	and $\frac{d\overline{p_i}}{d\overline{q_i}}(\overline{x_i}) = \int f(\overline{x_i}, x_i) dq_i(x_i \mid \overline{x_i})$. \par
	$(iii)$: See \cite[Theorem 2]{Ma15}.
\end{proof}

\begin{rem}
	As mentioned, in \cite[Theorem 1]{Ma15} it is stated that using the quantity 
	\[ \beta \coloneqq \inf_{i = 1,\ldots,n} \inf_{x \in \mathcal{X}^n : q^n(x) > 0} q_i(x_i \mid \overline{x_i}) \] 
	one can deduce $q^n(pr_i(x) = x_i) \ge \beta$ for all $x_i$ such that the LHS is nonzero. This is possible only if $q^n$ has full support. A counterexample is given by the push-forward of a random uniform permutation under the map $\sigma \mapsto (\sigma_1, \ldots, \sigma_n)$, which satisfies $\beta = 1$. \par 
	Another possibility would have been to modify the quantity as 
	\[ 
	\tilde{\beta}(q^n) \coloneqq \inf_{i = 1,\ldots,n} \inf_{x \in \mathcal{X}^n : q^n(x) > 0} q^n(pr_i(x) = x_i),	
	\]
	but this definition does not behave well under conditional probabilities. Indeed, it is not true that in general that for a fixed $y_i \in \mathcal{X}$ we also have $\tilde{\beta}(\overline{q_i}(\cdot \mid x_i)) \ge \tilde{\beta}(q^n)$, which can be seen in examples.
\end{rem}

As a consequence it is easy to prove a modified logarithmic Sobolev inequality under the conditions of Theorem \ref{AT:Thm:ApproximateTensorization}.

\begin{cor}
	Let $q^n$ be a measure on $\mathcal{X}^n$ with full support and assume that either \eqref{AT:Thm:eqn:W2estimate} holds or the coupling matrix $A$ satisfies $\norm{A}_{2 \to 2} < 1$ as in Theorem \ref{AT:Thm:ApproximateTensorization}$(iii)$. Then we have
	\[
		\Ent_{q^n}(e^f) \le \frac{2c}{\beta} \int \abs{\partial f}^2 e^f dq^n.
	\]
\end{cor}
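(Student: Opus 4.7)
The plan is to reduce the statement to the standard logarithmic Sobolev inequality obtained from the approximate tensorization of Theorem \ref{AT:Thm:ApproximateTensorization}, and then convert that LSI into its modified form via the substitution $F = e^{f/2}$.

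Under either hypothesis, Theorem \ref{AT:Thm:ApproximateTensorization} yields the tensorization
\[
	\Ent_{q^n}(F^2) \le \frac{2c}{\beta} \sum_{i=1}^n \int \Ent_{q_i(\cdot\mid \overline{y_i})}\bigl(F^2(\overline{y_i},\cdot)\bigr) d\overline{q_i}(\overline{y_i}),
\]
with either $c = C$ or $c = (1 - \norm{A}_{2 \to 2})^{-2}$. Each conditional measure $q_i(\cdot\mid \overline{y_i})$ is a probability measure on the finite set $\mathcal{X}$ with all masses bounded below by $\beta$, so it satisfies a one-site logarithmic Sobolev inequality whose constant depends only on $\beta$ (cf. \cite[Theorem A.1]{DSC96}). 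Proceeding as in the derivation of \eqref{Ising:eqn:LSI} in the proof of Proposition \ref{Ising:Prop:LSI}, this produces a global LSI
\[
	\Ent_{q^n}(F^2) \le 2\sigma^2 \int |\partial F|^2 dq^n,
\]
where $\sigma^2$ is a universal multiple of $c/\beta$.

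Applying this LSI to $F = e^{f/2}$, it remains to bound $\int |\partial e^{f/2}|^2 dq^n$ in terms of $\int |\partial f|^2 e^f dq^n$. The mean value theorem applied to $x \mapsto e^{x/2}$ gives the pointwise estimate $(e^{a/2} - e^{b/2})^2 \le \tfrac{1}{4}(a-b)^2(e^a + e^b)$. Inserting this into the definition of $(\partial_I e^{f/2})^2$ and integrating against $q^n$, the disintegration identity $dq^n(x) = d\overline{q_I}(\overline{x_I}) dm_{\overline{x_I}}(x_I)$ shows that the joint measure $dm_{\overline{x_I}}(y_I) dq^n(x)$ is symmetric under the swap $x_I \leftrightarrow y_I$, so the two exponential terms $e^{f(x)}$ and $e^{f(\overline{x_I}, y_I)}$ contribute equally. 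Summing over $I \in \mathcal{I}$ then yields
\[
	\int |\partial e^{f/2}|^2 dq^n \le \tfrac{1}{2} \int |\partial f|^2 e^f dq^n,
\]
and combining this with the LSI applied to $F = e^{f/2}$ delivers the desired inequality.

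The main obstacle is purely bookkeeping of the constants: carefully tracking how the tensorization factor $c/\beta$ combines with the $\beta$-dependent local LSI constant to reproduce the stated prefactor. All analytic ingredients — approximate tensorization, a one-site LSI on a finite set with bounded-below density, and the $F = e^{f/2}$ trick exploiting the symmetry of the disintegrated measure — are standard.
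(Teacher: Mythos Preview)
Your argument is valid and yields a modified LSI, but it takes a different route from the paper and does not recover the stated constant $\tfrac{2c}{\beta}$.

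The paper applies the tensorization of Theorem~\ref{AT:Thm:ApproximateTensorization} directly to $e^f$ (not to $F^2$), obtaining
\[
\Ent_{q^n}(e^f) \le \frac{2c}{\beta} \sum_{i=1}^n \int \Ent_{q_i(\cdot\mid \overline{y_i})}\bigl(e^{f(\overline{y_i},\cdot)}\bigr)\, d\overline{q_i}(\overline{y_i}),
\]
and then uses the elementary bound $\Ent_\mu(e^f)\le \Cov_\mu(f,e^f)\le \tfrac12\iint (f(x)-f(y))^2 e^{f(x)}\,d\mu\otimes\mu$, valid for \emph{any} probability measure $\mu$, on each conditional piece. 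No one-site LSI is invoked, and the constant $\tfrac{2c}{\beta}$ falls out cleanly.

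Your route instead passes through the full LSI $\Ent_{q^n}(F^2)\le 2\sigma^2\int|\partial F|^2\,dq^n$, which requires a one-site LSI on $q_i(\cdot\mid\overline{y_i})$. The resulting $\sigma^2$ is $\tfrac{2c}{\beta}$ \emph{times} the one-site LSI constant, and the latter depends on $\beta$ (of order $\log(1/\beta)$ from \cite[Theorem A.1]{DSC96}); it is not a universal multiple of $c/\beta$ as you state. So your ``bookkeeping'' remark understates the issue: the extra $\beta$-dependent factor is real and cannot be removed along this path. Your $F=e^{f/2}$ step and the symmetry argument for the disintegrated measure are fine; it is the detour through the LSI that costs the constant. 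The paper's direct route is both shorter and sharper.
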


\begin{proof}
	First let us note that for any probability measure $\mu$ we have
	\begin{align}	\label{AT:eqn:EntIneq}
		\Ent_{\mu}(e^f) \le \Cov_\mu(f,e^f) \le \frac{1}{2} \iint (f(x) - f(y))^2 e^{f(x)} d\mu(y) d\mu(x),
	\end{align}
	where $\Cov_\mu$ denotes the covariance under $\mu$. Indeed, this is easily seen by using Jensen's inequality to obtain $\Ent_{\mu}(e^f) \le \Cov_\mu(f,e^f)$ in combination with the elementary inequality $(a-b)(e^a - e^b) \le \frac{1}{2} (a-b)^2 (e^a + e^b)$ and the symmetry in the covariance. \par
	Now use Theorem \ref{AT:Thm:ApproximateTensorization} applied to the function $e^f$ for any function $f: \mathcal{X}^n \to \IR$, so that
	\begin{align}	\label{AT:eqn:Entef}
		\Ent_{q^n}(e^f) \le \frac{2c}{\beta} \sum_{i = 1}^n \int \Ent_{q_i(\cdot \mid \overline{y_i})}(e^{f(\cdot, \overline{y_i})}) d\overline{q_i}(\overline{y_i}),
	\end{align}
	and \eqref{AT:eqn:EntIneq} to $\mu = q_i(\cdot \mid \overline{y_i})$ to get
	\begin{align*}
		\Ent_{q^n}(e^f) \le \frac{c}{\beta} \sum_{i = 1}^n \iint (f(y) - f(\overline{y_i}, \tilde{y_i}))^2 dq_i(y_i \mid \overline{y_i}) e^{f(y)} dq^n(y) = \frac{2c}{\beta} \int \abs{\partial f}^2 e^f dq^n.
	\end{align*}
\end{proof}

In the notation of \cite{BG99} (see also \cite{GS16}) it means that the difference operator $\abs{\partial{f}}$ satisfies a modified logarithmic Sobolev inequality with constant $\frac{4c}{\beta}$. Thus, by \cite[Theorem 2.1]{BG99} (or more specifically, the remark thereafter) this yields Gaussian tail behavior with variance $\frac{4c}{\beta}$ for any (probabilistic) ``Lipschitz function'' $f$, i.e. for any function $f$ such that $\abs{\partial f} \le 1$.

\printbibliography
\end{document}